\numberwithin{equation}{section} 
\newtheorem{lemma}{Lemma}[section]
\newtheorem{corollary}[lemma]{Corollary}
\newtheorem{proposition}[lemma]{Proposition}
\newtheorem{theorem}[lemma]{Theorem}
\theoremstyle{definition}
\newtheorem{definition}[lemma]{Definition}
\newtheorem{example}[lemma]{Example}
\newtheorem{remark}[lemma]{Remark}
\newlist{thm_enum}{enumerate}{1}
\setlist[thm_enum]{label=\normalfont(\alph*)}
\newlist{def_enum}{enumerate}{1}
\setlist[def_enum]{label=\normalfont(\roman*)}
\newlist{equiv_enum}{enumerate}{1}
\setlist[equiv_enum]{label=\normalfont(\roman*)}
\newcommand{\IN}{\mathbb{N}}
\newcommand{\IR}{\mathbb{R}}
\newcommand{\IC}{\mathbb{C}}
\newcommand{\abs}[1]{\left\lvert#1\right\rvert}
\newcommand{\normalabs}[1]{\lvert#1\rvert}
\newcommand{\norm}[1]{\left\lVert#1\right\rVert}
\newcommand{\normalnorm}[1]{\lVert#1\rVert}
\newcommand{\biggnorm}[1]{\biggl\lVert#1\biggr\rVert}
\newcommand{\R}[2][\empty]{
	\ifthenelse{\equal{#1}{\empty}}
		{\mathcal{R}\left\{#2\right\}}
		{\mathcal{R}_{#1}\left\{#2\right\}}
}
\newcommand{\LeftEqNo}{\let\veqno\@@leqno}
\renewcommand{\d}{\mathop{}\!d}
\renewcommand{\Re}{\operatorname{Re}}
\renewcommand{\epsilon}{\varepsilon}
\let\temp\phi
\let\phi\varphi
\let\varphi\temp
\DeclareMathOperator{\Id}{Id}
\DeclareMathOperator{\supp}{supp}
\DeclareMathOperator{\Tr}{Tr}
\DeclareMathOperator{\dist}{dist}
\DeclareMathOperator{\MaxReg}{MR}
\DeclareMathOperator{\BV}{BV}
\DeclareMathOperator{\Ell}{Ell}
\begin{document}

\title[Maximal Regularity for Elliptic Operators of Bounded Variation]{Non-Autonomous Maximal Regularity for Forms Given by Elliptic Operators of Bounded Variation}

\begin{abstract}
	We show maximal $L^p$-regularity for non-autonomous Cauchy problems provided the trace spaces are stable in some parameterized sense and the time dependence is of bounded variation. In particular, on $L^2$, we obtain for all $p \in (1,2]$ maximal $L^p$-regularity for non-autonomous elliptic operators with measurable coefficients.    
\end{abstract}

\author{Stephan Fackler}
\address{Institute of Applied Analysis, Ulm University, Helmholtzstr.\ 18, 89069 Ulm}
\email{stephan.fackler@uni-ulm.de}
\thanks{This work was supported by the DFG grant AR 134/4-1 ``Regularität evolutionärer Probleme mittels Harmonischer Analyse und Operatortheorie''.}
\keywords{}
\subjclass[2010]{Primary 35B65; Secondary 47A07.}

\maketitle

\section{Introduction}
	
	Let $X$ be a Banach space and $(A(t))_{t \in [0,T)}$ closed operators on $X$. For an inhomogenity $f\colon [0,T) \to X$ and an initial value $u_0 \in X$ we consider the non-autonomous Cauchy problem
	\begin{equation*}
		\LeftEqNo
		\label{eq:nacp}\tag{NACP}
		\left\{
		\begin{aligned}
			\dot{u}(t) + A(t)u(t) & = f(t) \\
			u(0) & = u_0.
		\end{aligned}
		\right.
	\end{equation*}
	For $p \in (1, \infty)$ we say that~\eqref{eq:nacp} has \emph{maximal $L^p$-regularity} for $u_0 \in X$ if for all $f \in L^p([0,T);X)$ there exists a unique solution in $\MaxReg_{p}^{A}([0,T))$, the space of all measurable functions $u\colon [0,T) \to X$ with $\dot{u} \in L^p([0,T);X)$, $u(t) \in D(A(t))$ for almost all $t \in [0,T)$ and $A(\cdot)u(\cdot) \in L^p([0,T);X)$. Of particular interest for applications is the case where $A(t)$ realize elliptic operators. Maximal regularity can then be used in various ways to show existence of quasilinear equations and to study the asymptotic behaviour of their solutions~\cite{Pru02}.
	
	In the autonomous case $A(t) = A$ maximal regularity is well understood. A closed operator $A\colon D(A) \to X$ has maximal $L^p$-regularity for $u_0 \in X$ if and only if $A$ is $\mathcal{R}$-sectorial and $u_0$ lies in the real interpolation space $\Tr_p A \coloneqq (D(A), X)_{1/p,p}$. The notation $\Tr_p A$ is justified by the fact that in the autonomous case one has $\MaxReg_p^A([0,T]) \hookrightarrow C([0,T];\Tr_p A)$ and, further, that for every $x \in \Tr_p A$ one can find $u \in \MaxReg_p^A([0,T])$ with $u(0) = x$~\cite[Theorem~III.4.10.2]{Ama95}.
	
	The easiest setting is that of non-autonomous forms on Hilbert spaces. Here we consider a Hilbert space $V$ densely embedded into a second Hilbert space $H$. This induces the Gelfand triple $V \hookrightarrow H \hookrightarrow V'$. Further, one has non-autonomous bounded coercive sesquilinear forms $a\colon [0,T] \times V \times V \to \IC$, i.e.\ $a(t,\cdot,\cdot)$ is sesquilinear for all $t \in [0,T]$ and satisfies for some $\alpha, M > 0$ and all $u, v \in V$
	\begin{equation*}
		\tag{A}
		\label{eq:form_assumptions}
		\begin{split}
			\abs{a(t,u,v)} & \le M \norm{u}_V \norm{v}_V, \\
			\Re a(t,u,u) & \ge \alpha \norm{u}_V^2.
		\end{split}
	\end{equation*}
	For fixed $t \in [0,T]$ the sesquilinear form $a(t,\cdot,\cdot)$ induces a bounded operator $\mathcal{A}(t)\colon V \to V'$, which is also an unbounded operator on $V'$. We denote its part in $H$ by $A(t)$. By a classical result of Lions~\cite[p.~513, Theorem~2]{DauLio92}, the problem~\eqref{eq:nacp} for $\mathcal{A}$ satisfies maximal $L^2$-regularity if $t \mapsto a(t,u,v)$ is measurable for all $u, v \in V$. However, maximal $L^2$- or $L^p$-regularity for the operator $A(t)$ on $H$ is far more involved. For some time there was the hope that maximal $L^2$-regularity for $A$ holds for all $u_0 \in \Tr_2 A(0)$ and measurable forms. Requiring additionally the symmetry of $a$, i.e.\ $a(t,u,v) = \overline{a(t,v,u)}$ for all $t \in [0,T]$ and $u, v \in V$, this problem was explicitly asked by Lions for $u_0 = 0$~\cite[p.~68]{Lio61}. Dier observed that in absence of the Kato square root property maximal $L^2$-regularity can fail due to a single jump of the form~\cite[Section~5.2]{Die14}. Later, the author gave Hölder continuous counterexamples to Lions' original question~\cite{Fac16c}. From the positive results in~\cite{DieZac16} and the counterexamples in~\cite{Fac16c} it is now understood that the critical case for the time regularity of $t \mapsto a(t, \cdot, \cdot)$ is $H^{1/2}(\mathcal{L}(V,V'))$ in the Sobolev scale. In fact, positive results for higher regularities can be found in~\cite{DieZac16}, whereas the forms in~\cite{Fac16c} give counterexamples for lower regularities. What remains open as of now is the critical case of $H^{1/2}$-regularity.
	
	Note that in this case the Sobolev index of $\mathcal{A}(\cdot)$ is equal to zero. An easier example of the same index is however partially understood. Namely, maximal $L^2$-regularity was shown by Dier assuming $W^{1,1}$-regularity and symmetry of the forms~\cite{Die15} (for an alternative proof see~\cite{MenHaf16}). In fact, the result even holds for forms of bounded variation. This is particularly interesting as such forms may have jumps. In view of the counterexamples maximal $L^2$-regularity can fail for general forms in this setting. However, in terms of applications this result is not satisfactory as it excludes elliptic operators in divergence form with non-symmetric coefficients.
	
	As a corollary of our main result we will see that maximal $L^p$-regularity in the optimal range $p \in (1,2]$ in fact holds for \emph{non-symmetric} elliptic operators of bounded variation and for a very broad range of domains and boundary conditions. Note that this result faces some critical points of the theory: it deals at the same time with the crucial example of elliptic operators, regularity with critical Sobolev index and infinitely many jumps. Moreover, we are able to deal with the case $p \neq 2$.
	
	We now proceed as follows: in the next section we present the general strategy of our proof towards the existence of a solution. We then deal with the uniqueness of solutions and the main results in the following sections, whereas the proofs of some more technical estimates are postponed to later sections. In the last section we discuss the optimality of our results.
	
\section{Existence -- Strategy of Proof}
	
	In this section we introduce the main ideas and concepts relevant for the proof of our maximal regularity result. Observe that the case of bounded variation in particular includes piecewise constant operator functions. We use the following handy definition.
	
	\begin{definition}
		A non-autonomous operator $A(\cdot)$ is called \emph{a step operator} if there exists a partition $0 = t_0 < t_1 < \cdots < t_N = T$ and closed operators $A_1, \ldots, A_N$ such that for almost every $t \in [0,T)$
			\begin{equation*}
				A(t) = \sum_{k=1}^N A_k \mathds{1}_{[t_{k-1}, t_k)}(t).
			\end{equation*}
		Analogously, we define \emph{step forms} as piecewise constant forms with common domain.
	\end{definition}

	To simplify matters, we introduce our main ideas only in the form setting. However, our results will hold for more general non-autonomous operators. The case of step operators can be solved by iterating the autonomous result. In particular, for $L^p$-maximal regularity to hold one needs by the autonomous case the inclusion $\Tr_p(A_k) \subset \Tr_p(A_{k+1})$. Since we may interchange the order of two steps, for a reasonable result all trace spaces must agree. Further, we have $\Tr_2(A_k) = [D(A_k),H]_{\frac{1}{2}}$ in the Hilbert space case. If $A$ is induced by a symmetric form, we have $[D(A),H]_{\frac{1}{2}} = V$~\cite[Example~p.~45]{Are04}. Hence, we need $[D(A_k),H]_{\frac{1}{2}} = V$ for all $k \in \IN$, i.e.\ the operators $A_k$ satisfy the so-called \emph{Kato square root property}. It is known that the Kato square root property may fail for general forms~\cite{McI72}. However, by the celebrated solution of Kato's problem first proven in the full space case in~\cite{AHL+02} it holds true for elliptic operators in divergence form under a broad class of domains and boundary conditions.
			
	Now, more explicitly, recall that in the autonomous case $A(t) = A$ the unique mild solution of~\eqref{eq:nacp} is given by the variation of constants formula
	\begin{equation*}
		u(t) = \int_0^t e^{-(t-s) A} f(s) \d s + e^{-t A} u_0.
	\end{equation*}
	Hence, the difference of two solutions $u_1$ and $u_2$ for different operators $A_1$ and $A_2$, but the same initial value $u_0 \in H$ and inhomogenity $f \in L^1([0,T];H)$, is given by
	\begin{equation}
		\label{eq:difference}
		u_2(t) - u_1(t) = \int_0^t (e^{-(t-s) A_2} - e^{-(t-s) A_1}) f(s) \d s + (e^{-t A_2} - e^{-t A_1}) u_0.
	\end{equation}
	Let us for the moment suppose that for this difference an estimate of the form 
		\begin{equation*}
			\tag{E}
			\label{eq:difference_estimate}
			\norm{\dot{u}_2 - \dot{u}_1}_{L^2([0,T];H)} \lesssim \norm{\mathcal{A}_2 - \mathcal{A}_1} (\norm{f}_{L^2([0,T];H)} + \norm{u_0}_V)
		\end{equation*} 
	holds, where the implicit constant is independent of the operators. As a first step we obtain a good a priori estimate for step forms in terms of their variation. 
	
	\begin{definition}
		Let $E$ be a Banach space and $f\colon [0,T] \to E$. The \emph{variation} of $f$ is
			\begin{equation*}
				\norm{f}_{\BV([0,T]; E)} \coloneqq \sup_{0 = t_0 < t_1 < \cdots < t_N = T} \sum_{k=1}^N \norm{f(t_k) - f(t_{k-1})}_{E},
			\end{equation*}
		where the supremum is over all partitions $0 = t_0 < t_1 < \cdots < t_N = T$ of $[0,T]$.
	\end{definition}
	
	Let $B(t) = \sum_{k=1}^N A_k \mathds{1}_{[t_{k-1}, t_k)}(t)$ be induced by a step form. Replacing $B(t)$ by 
	\begin{equation*}
		B_1(t) = \sum_{k=1}^{N-2} A_k \mathds{1}_{[t_{k-1}, t_k)}(t) + A_{N-1} \mathds{1}_{[t_{N-2}, t_N)}(t),
	\end{equation*}
	i.e.\ loosing the last jump and staying constant there instead, the difference of the solutions $u = u^0$ and $u^1$ of the respective equations~\eqref{eq:nacp} satisfies by~\eqref{eq:difference_estimate}
	\begin{align*}
		\normalnorm{\dot{u} - \dot{u}^1}_{L^2([0,T);H)} & = \normalnorm{\dot{u} - \dot{u}^1}_{L^2([t_{N-1}, T);H)} \\
		& \lesssim \norm{\mathcal{A}_N - \mathcal{A}_{N-1}} ( \norm{f}_{L_2([t_{N-1}, T);H)} + \norm{u(t_{N-1})}_V ).
	\end{align*}
	We now iterate the previous argument. By subsequently loosing the last jump of the previous operator, we obtain the operator functions
	\begin{equation*}
	 	B_m(t) = \sum_{k = 1}^{N - m - 1} A_k \mathds{1}_{[t_{k-1}, t_k)}(t) + A_{N - m} \mathds{1}_{[t_{N - m - 1}, t_{N})}(t).
	\end{equation*} 
	Let $u^m$ be the corresponding solution of~\eqref{eq:nacp}. Iterating the estimate gives
	\begin{equation}
		\label{eq:derivative}
		\begin{split}
    		\MoveEqLeft \normalnorm{\dot{u}}_{L^2([0,T);H)} \le \sum_{k=1}^{N-1} \normalnorm{\dot{u}^k - \dot{u}^{k-1}}_{L^2([t_{N-k},T);H)} + \normalnorm{\dot{u}^{N-1}}_{L^2([0,T];H)} \\
    		& \lesssim \sum_{k=1}^{N-1} \norm{\mathcal{A}_{N-k+1} - \mathcal{A}_{N-k}} (\norm{f}_{L^2([t_{N-k},T);H)} + \norm{u(t_{N-k})}_V) \\
			& \qquad + \normalnorm{\dot{u}^{N-1}}_{L^2([0,T];H)} \\
    		& \le \norm{\mathcal{A}}_{\BV([0,T];\mathcal{L}(V,V'))} (\norm{f}_{L^2([0,T);H)} + \norm{u}_{L^{\infty}([0,T];V)}) \\
			& \qquad + \normalnorm{\dot{u}^{N-1}}_{L^2([0,T];H)}.
		\end{split}
	\end{equation}
	This shows that we obtain an a priori estimate for the derivatives provided we have an estimate for the solution in $L^{\infty}([0,T];V)$. In fact, we will establish such an estimate under a parameterized variant of the Kato square root property. Moreover, exactly this parameterization can be generalized to a Banach space setting. In the non-Hilbert space setting we work with $\mathcal{R}$-boundedness on UMD spaces. For these concepts we refer to~\cite{KunWei04} and~\cite{DHP03}.
	
	\begin{definition}\label{def:parameterized_kato}
		Let $X$ be a UMD space and $p \in (1, \infty)$. An \emph{$L^p$-trace parameterization} in $X$ is the datum $(\mathcal{P}, F_1 , F_2, E, U, O)$ for complex Banach spaces $E$, $F_1$ reflexive and $F_2$ with embeddings $F_1 \hookrightarrow X \hookrightarrow F_2$, $U \subset O \subset E$ subsets with $U$ closed and convex and a bounded linear operator $\mathcal{P}\colon E \to \mathcal{L}(F_1,F_2)$ satisfying
		\begin{def_enum}
			\item\label{trace_param:distance} $\dist(U, O^c) > 0$.
			\item\label{trace_param:r_boundedness} There exist $\phi \in (0,\frac{\pi}{2})$ such that for all $x \in O$ the operators $\mathcal{P}(x)$ seen as unbounded operators on $F_2$ and the parts $\mathcal{P}(x)_{|X}$ of $\mathcal{P}(x)$ in $X$, i.e.\ $D(\mathcal{P}(x)_{|X}) = \{ z \in F_1: \mathcal{P}(x)z \in X \}$, satisfy the spectral inclusions
				\begin{equation*}
					\sigma(\mathcal{P}(x)) \cup \sigma(\mathcal{P}(x)_{|X}) \subset \{ z \in \IC: \abs{\arg z} < \phi \}
				\end{equation*}
				together with the uniform resolvent bound 
				\begin{equation*}
					\sup_{x \in O} \sup_{\abs{\arg z} \ge \phi} \norm{(1+\abs{z})R(z,\mathcal{P}(x))} < \infty
				\end{equation*}
				and the $\mathcal{R}$-bound
				\begin{equation*}
					\sup_{x \in O} \R{(1 + \abs{z})R(z, \mathcal{P}(x)_{|X}): \abs{\arg z} \ge \phi} < \infty.
				\end{equation*}
			\item\label{trace_param:trace} The trace spaces $\Tr_p \mathcal{P}(x)_{|X} = (D(\mathcal{P}(x)_{|X}),X)_{1/p,p}$ are independent of $x \in O$ and their norms are uniformly comparable.
		\end{def_enum}
		A family $(A_i)_{i \in I}$ of closed operators on $X$ is \emph{$L^p$-parameterized} by a parameterization $(\mathcal{P}, F_1 , F_2, E, U, O)$ if for all $i \in I$ one has $A_i =  \mathcal{P}(x)_{|X}$ for some $x \in U$.
	\end{definition}
	
	\begin{remark}\label{rem:uniform_estimate}
		Since we assumed in Definition~\ref{def:parameterized_kato} that $X$ is a UMD space, property~\ref{trace_param:r_boundedness} implies that $\mathcal{P}(x)_{|X}$ has maximal $L^p$-regularity for all $p \in (1, \infty)$. Further, it follows from the explicit dependence in the vector-valued Mihlin multiplier theorem~\cite[Corollary~4.4]{GirWei03b} that there exists $C > 0$ such that for all $x \in O$, $T \in (0,\infty]$, $f \in L^p([0,T);X)$ and $u_0 \in \Tr_p \mathcal{P}(x)_{|X}$ the solution $u$ of~\eqref{eq:nacp} satisfies
		\begin{equation*}
			\norm{u}_{W^{1,p}([0,T);X)} + \normalnorm{\mathcal{P}(x)_{|X} u}_{L^p([0,T);X)} \le C (\norm{f}_{L^p([0,T);X} + \norm{u_0}_{\Tr \mathcal{P}(x)_{|X}}).
		\end{equation*}
		Further, by the trace method for real interpolation we have
		\begin{equation}
			\label{eq:trace_embedding}
			\norm{u}_{C([0,T);\Tr \mathcal{P}(x)_{|X})} \lesssim \norm{f}_{L^p([0,T);X)} + \norm{u_0}_{\Tr \mathcal{P}(x)_{|X}}.
		\end{equation}
	\end{remark}
	
	 A fundamental example is given by elliptic operators in divergence form.
	 
	 \begin{example}\label{ex:trace_elliptic}
	 	For $M, \epsilon > 0$ we denote by $\Ell(\epsilon, M)$ the set of all elliptic divergence form operators on $L^2(\IR^n)$ induced by
		\begin{equation*}
			(u,v) \mapsto \sum_{i,j=1}^n \int_{\IR^n} a_{ij} \partial_i u \overline{\partial_j v}
		\end{equation*}
		for some $(a_{ij}) \in C(\epsilon, M)$, the set of all complex coefficients $a_{ij} \colon \IR^n \to \IC^{n \times n}$ with $\Re \sum_{i,j=1}^n a_{ij} \xi_i \overline{\xi}_j \ge \epsilon \abs{\xi}^2$ and $\norm{a_{ij}}_{\infty} \le M$. Further, let $E = L^{\infty}(\IR^n; \IC^{n \times n})$ and
		\begin{align*}
			\mathcal{P}\colon L^{\infty}(\IR^n;\IC^{n \times n}) & \to \mathcal{L}(H^1(\IR^n),H^{-1}(\IR^n)) \\
			(a_{ij}) & \mapsto \biggl[ u \mapsto \bigl[ v \mapsto \sum_{i,j=1}^n \int_{\IR^n} a_{ij} \partial_i u \overline{\partial_j v} \bigr] \biggr].
		\end{align*}
		Then $\Ell(\epsilon, M)$ is $L^2$-parameterized by 
		\begin{equation*}
			(\mathcal{P}, H^1(\IR^n), H^{-1}(\IR^n), L^{\infty}(\IR^n; \IC^{n \times n}), C(\epsilon, M), C(\epsilon/2, 2M)).
		\end{equation*} 
		Note that property~\ref{trace_param:distance} is clear, whereas~\ref{trace_param:r_boundedness} is always satisfied for uniformly bounded and coercive forms: $\mathcal{R}$-boundedness reduces to mere boundedness on Hilbert spaces and the estimates for sectorial operators only depend on the constants $\epsilon$ and $M$ in the definition of forms. The crucial property~\ref{trace_param:trace} is a consequence of the solution of the Kato square root problem in~\cite[Theorem~6.1]{AHL+02}. Here we use that $[D(A),H]_{1/2,2} = [D(A),H]_{1/2}$ isometrically, which in turn is uniformly equivalent to $D(A^{1/2}) = H^1(\IR^n)$ because of~\cite[Proposition~2.5]{Fac15c} and the fact that all operators induced by forms satisfy $\normalnorm{A^{it}} \le e^{\frac{\pi}{2} \abs{t}}$~\cite[Theorem~4.29]{Lun09}.
	 \end{example}
	 
	 For general sesquilinear forms one has the following slightly weaker positive result.
	 
	 \begin{example}\label{ex:form}
	 	Let $V,H$ be complex Hilbert space with dense embeddings $V \hookrightarrow H$ and let $\mathcal{F}(\epsilon, M)$ for $\epsilon, M > 0$ be the family of all operators in $\mathcal{L}(V,V')$ induced by some sesquilinear form $a\colon V \times V \to \IC$ with $\Re a(u,u) \ge \epsilon \norm{u}_V^2$ and $\abs{a(u,v)} \le M \norm{u}_V \norm{v}_V$ for all $u, v \in V$. Consider the trivial parameterization $\mathcal{P}\colon \mathcal{L}(V,V') \to \mathcal{L}(V,V')$ given by the identity mapping. We show that for $p \in (1,2)$
		\begin{equation*}
			(\mathcal{P},V,V',\mathcal{L}(V,V'), \mathcal{F}(\epsilon,M), \mathcal{F}(\epsilon/2, M + \epsilon/2))
		\end{equation*} 
		is an $L^p$-parameterization of all operators obtained as part in $H$ of some element in $\mathcal{F}(\epsilon,M)$. First, \ref{trace_param:distance} follows from the inclusion $\mathcal{F}(\epsilon,M) + B(0,\epsilon/2) \subset \mathcal{F}(\epsilon/2, M + \epsilon/2)$. Secondly, \ref{trace_param:r_boundedness} holds universally for forms as discussed in Example~\ref{ex:trace_elliptic}. Let us now verify~\ref{trace_param:trace}. Suppose that $A$ is induced by some element in $\mathcal{F}(\epsilon,M)$. Then the same holds for its Hilbert space adjoint $A^*$ and for the real part $\Re \frac{1}{2} (A+A^*)$. By a result of Kato one has $D(A^{\alpha}) = D((\Re A)^{\alpha})$ for all $\alpha \in (0, \frac{1}{2})$~\cite[Theorem~3.1]{Kat61b}. Further, their norms are comparable with constants only depending on $\alpha$, $M$ and $\epsilon$. Since the Kato square root property holds isometrically for self-adjoint operators such as $\Re A$ and since the fractional domains are uniformly comparable to complex interpolation spaces (see Example~\ref{ex:form}), for $\alpha \in (0,\frac{1}{2})$ the reiteration theorem for the complex method gives the uniform equivalence~\cite[Theorem~4.6.1]{BerLoe76}
		\begin{align*}
			[H,D(A)]_{\alpha} & = D(A^{\alpha}) = D((\Re A)^{\alpha}) = [H, D(\Re A)]_{\alpha} \\
			& = [H,[H,D(\Re A)]_{\frac{1}{2}}]_{2\alpha} = [H,V]_{2\alpha}.
		\end{align*}
		By the reiteration theorem for the real method~\cite[1.10.3, Theorem~2]{Tri78} we have
		\begin{align*}
			(H,D(A))_{1-\frac{1}{p},p} & = (H,[H,D(A)]_{1-\frac{1}{2}(\frac{1}{2} + \frac{1}{p})})_{\frac{4p-4}{3p-2},p} = (H,[H,V]_{\frac{3p-2}{2p}})_{\frac{4p-4}{3p-2},p} \\
			& = (H, V)_{2-\frac{2}{p},p}.
		\end{align*}
		The first equivalence is uniform as a consequence of the constants appearing in the proof of the reiteration theorem and the estimate obtained at the end of the proof of~\cite[1.10.3, Theorem~1]{Tri78}.
	 \end{example}
	 
	 Certain divergence form operators on $L^q$-spaces seem to fit in our framework.
	 
	 \begin{remark}
	 	For $q \in (1, \infty)$ and coefficients $A \in C(\epsilon,M)$ we define the operator $\mathcal{B}_q(A)\colon W^{1,q}(\IR^n) \to W^{-1,q}(\IR^n)$ as
		\begin{equation*}
			\langle \mathcal{B}_q(A) u, v \rangle_{W^{-1,q},W^{1,q}} \coloneqq \int_{\IR^n} A \nabla u \overline{\nabla v}.
		\end{equation*}
		For $q$ in an interval $I_A$ containing $2$ and depending only on the ellipticity constants and the dimension $n$ one can show that $\mathcal{B}_q(A)$ is sectorial. This follows from the results in~\cite[Section~4]{Aus07} and~\cite[Section~6]{DisElsReh16}. The part of $\mathcal{B}_q(A)$ on $L^q(\IR^n)$ then is $\mathcal{R}$-sectorial by~\cite[Theorem~5.1]{Aus07} and~\cite[Theorem~5.3]{KalWei01}. Hence, if one defines $\mathcal{P}\colon L^{\infty}(\IR^n; \IC^{n \times n}) \to \mathcal{L}(W^{1,q}(\IR^n), W^{-1,q}(\IR^n))$ as $A \mapsto \mathcal{B}_q(A)$,
		\begin{equation*}
			(\mathcal{P},W^{1,q}(\IR^n),W^{-1,q}(\IR^n),L^{\infty}(\IR^n;\IC^{n \times n}), C(\epsilon, M), C(\epsilon/2, 2M))
		\end{equation*}
		is a candidate for an $L^p$-trace parameterization. However, several details need to be checked. First, the above results should only depend on $q$, $n$ and the ellipticity constants. This is not explicitly stated in the cited results. Secondly, for $q \in I_A$ the operator satisfies the Kato square root property $[L^q(\IR^n),D(B_q)]_{1/2} = D(B_q^{1/2}) = W^{1,q}(\IR^n)$. Hence, by using an analogous reiteration argument as in Example~\ref{ex:form}, the space $\Tr_p B_q$ is independent of $B_q$ for $p \in (1,2)$. However, we do not know whether the independence of $\Tr_2 A$ holds as well, as it is the case for $q = 2$. Note that if $q \neq 2$, then under general assumptions one has $[L^q,D(A_q)]_{1/2} \neq (L^q,D(A_q))_{1/2,2}$~\cite[Corollary~3.2]{KucWei05}.
		
		Note that if one assumes more spatial regularity on the coefficients, one may restrict to a smaller representation and the properties of Definition~\ref{ex:trace_elliptic} become more easy to verify.	 
	\end{remark}

	 The following a priori estimate in $L^{\infty}([0,T];\Tr_p A)$ is based on Section~\ref{sec:a_priori_bounded}. If $A(\cdot)$ is parameterized by some $L^p$-trace parameterization, then $\Tr_p A(t)$ is independent of $t$. Therefore we may simply write $\Tr_p A$.
	
	\begin{theorem}\label{thm:a_priori_bounded}
		Let $(\mathcal{P}, F_1 , F_2, E, U, O)$ be an $L^p$-trace parameterization. For every $R > 0$ there exists a constant $C = C(R)$ such that if $A(t) = \mathcal{P}(x(t))$ for $x\colon [0,T] \to E$ is a step function with $\norm{x}_{\BV} \le R$, then the solution $u$ of~\eqref{eq:nacp} satisfies
			\[
				\norm{u}_{C([0,T];\Tr_p A)} \le C (\norm{f}_{L^p([0,T];H)} + \norm{u_0}_{\Tr_p A}).
			\]
	\end{theorem}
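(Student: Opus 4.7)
The plan is to implement the iterative jump-elimination strategy of equation~\eqref{eq:derivative}, but applied now to the $\Tr_p A$ norm rather than the $L^p$-norm of the derivative. Write $A(\cdot) = \mathcal{P}(x(\cdot))$ with $x$ a step function taking values $x_1,\ldots,x_N \in U$, and introduce the truncated operators $B_m$ exactly as in the excerpt, so that $B_0 = A$ and $B_{N-1} \equiv \mathcal{P}(x_1)$ is constant. Let $u^m$ denote the solution of~\eqref{eq:nacp} with operator $B_m$ and fixed data $(u_0,f)$. The uniform autonomous estimate from Remark~\ref{rem:uniform_estimate} together with the embedding~\eqref{eq:trace_embedding} yields
\[
	\norm{u^{N-1}}_{C([0,T]; \Tr_p A)} \le C \bigl( \norm{f}_{L^p([0,T]; X)} + \norm{u_0}_{\Tr_p A} \bigr),
\]
with a constant depending only on the parameterization, providing the base case.

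The core input I would draw from Section~\ref{sec:a_priori_bounded} is a difference estimate for $z_m \coloneqq u^{m-1} - u^m$. Since these solutions agree on $[0, t_{N-m}]$, $z_m$ vanishes at $t_{N-m}$ and on $[t_{N-m}, T]$ solves an autonomous Cauchy problem whose right-hand side is the perturbation $(\mathcal{P}(x_{N-m+1}) - \mathcal{P}(x_{N-m})) u^{m-1}$. What is required is an estimate of the form
\[
	\norm{z_m}_{C([0,T]; \Tr_p A)} \le C \norm{x_{N-m+1} - x_{N-m}}_E \bigl( \norm{f}_{L^p([t_{N-m}, T]; X)} + \norm{u^{m-1}}_{C([t_{N-m}, T]; \Tr_p A)} \bigr),
\]
uniform in $x_j \in U$. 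Granted this, writing $u = u^{N-1} + \sum_{m=1}^{N-1} z_m$, applying the triangle inequality and invoking $\sum_{m=1}^{N-1} \norm{x_{N-m+1} - x_{N-m}}_E \le R$ yields
\[
	\norm{u}_{C([0,T]; \Tr_p A)} \le C_0 \bigl( \norm{f}_{L^p([0,T];X)} + \norm{u_0}_{\Tr_p A} \bigr) + C_0 R \max_m \norm{u^m}_{C([0,T]; \Tr_p A)}.
\]

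To convert this into a one-sided bound I would localize in time: partition $[0,T]$ into at most $\lceil 2C_0 R\rceil + 1$ sub-intervals on each of which the variation of $x$ is less than $\delta \coloneqq (2C_0)^{-1}$. On each such sub-interval, applying the preceding inequality with $R$ replaced by $\delta$ gives $C_0 \delta \le 1/2$, so the maximum term can be absorbed into the left-hand side; this produces a bound involving only the inhomogeneity and the trace norm of $u$ at the left endpoint of the piece. Chaining these pieces and using the embedding~\eqref{eq:trace_embedding} at the interface points yields the claim with a final constant depending only on $R$ through the number of pieces. The main obstacle is of course the difference estimate itself: transferring an $E$-level perturbation $\mathcal{P}(x_{N-m+1}) - \mathcal{P}(x_{N-m}) \in \mathcal{L}(F_1, F_2)$ into an estimate at the $\Tr_p A$-level on the $X$-valued solution is delicate, and this is where the parameterized Kato-type hypothesis~\ref{trace_param:trace}, together with the uniform $\mathcal{R}$-sectoriality~\ref{trace_param:r_boundedness} on the UMD space $X$, must enter in an essential way. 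Establishing it is precisely the content of Section~\ref{sec:a_priori_bounded}.
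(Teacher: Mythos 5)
Your overall strategy -- peeling off jumps from the end, summing the resulting telescoping differences in the $\Tr_p A$-norm via a uniform difference estimate, absorbing when the total variation is below an explicit threshold, and localizing in time when it is not -- is exactly the paper's. The base case via the autonomous estimate~\eqref{eq:trace_embedding}, and the count of sub-intervals of small variation, also match the paper (the paper checks $M \le 2RK$; you get essentially the same count). The difference estimate you isolate is indeed the content of Section~\ref{sec:a_priori_bounded} (Propositions~\ref{prop:linfity_initial} and~\ref{prop:linfty_inhomogenity}), which the paper proves by analyticity of $z \mapsto u_{\mathcal{A} + z\Delta\mathcal{A}}$ rather than by the Duhamel-type perturbation formulation you sketch, but you explicitly defer that, so this is a cosmetic difference in presentation.

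There is, however, a genuine gap in your absorption step. You arrive at
\[
	\norm{u}_{C([0,T];\Tr_p A)} \le C_0\bigl(\norm{f}_{L^p} + \norm{u_0}_{\Tr_p A}\bigr) + C_0 R \max_m \norm{u^m}_{C([0,T];\Tr_p A)},
\]
and then assert that after localizing so that $C_0 \delta \le 1/2$ the maximum ``can be absorbed into the left-hand side.'' But the left-hand side is $\norm{u^0}_C$ while the right-hand side involves $\max_m \norm{u^m}_C$; since $u^0$ is only one of the $u^m$, you have $\norm{u^0}_C \le \max_m \norm{u^m}_C$ and not the reverse, so no direct absorption is possible. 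What the paper actually uses is the sharper form of the difference estimate: because $B_{m-1}$ and the full operator $A$ agree on $[0, t_{N-m+1})$, one has $u^{m-1}(t_{N-m}) = u(t_{N-m})$, so the initial-value term entering the $m$-th difference estimate is $\norm{u(t_{N-m})}_{\Tr_p A} \le \norm{u}_{C([0,T];\Tr_p A)}$ and not $\norm{u^{m-1}}_{C([t_{N-m},T];\Tr_p A)}$. With this observation the right-hand side involves only $\norm{u}_C$, and the absorption closes immediately. (Alternatively, one could patch your version by running the same telescoping bound for each $u^m$ and deducing a self-improving bound on $W \coloneqq \max_m \norm{u^m}_C$, but that is an extra induction you neither state nor need once you use the agreement of the truncated solutions at the jump time.)
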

	\begin{proof}
		Using the same notation as in~\eqref{eq:derivative}, Proposition~\ref{prop:linfity_initial} and Proposition~\ref{prop:linfty_inhomogenity} imply that for some $K > 0$ and all $N \in \IN$
		\begin{equation*}
    		\begin{split}
        		\MoveEqLeft \norm{u}_{C([0,T];\Tr_p A)} \le \sum_{k=1}^{N-1} \normalnorm{u^k - u^{k-1}}_{C([t_{N-k},T];\Tr_p)} + \normalnorm{u^{N-1}}_{C([0,T];V)} \\
        		& \le K \sum_{k=1}^{N-1} \norm{x_{N-k+1} - x_{N-k}} (\norm{f}_{L^p([t_{N-k},T];X)} + \norm{u(t_{N-k})}_{\Tr_p A}) \\
    			& \qquad + \normalnorm{u^{N-1}}_{C([0,T];\Tr_p A)} \\
        		& \le K \norm{x}_{\BV([0,T];E)} (\norm{f}_{L^p([0,T];X)} + \norm{u}_{C([0,T];\Tr_p A)}) \\
    			& \qquad + \normalnorm{u^{N-1}}_{C([0,T];\Tr_p A)}.
    		\end{split}
    	\end{equation*}
		Since $u^{N-1}$ solves the autonomous equation $\dot{u}(t) + A_1 u(t) = f(t)$, we have for $\norm{x}_{\BV} \le \frac{1}{2K}$ the estimate
		\begin{equation}
			\label{eq:linfty_estimate}
			\norm{u}_{C([0,T];\Tr_p)} \lesssim \norm{f}_{L^p([0,T];X)} + \norm{u_0}_{\Tr_p A}.
		\end{equation}
		Here the implicit constant does not depend on $T$. Now, let $x(t) = \sum_{k=1}^N \mathds{1}_{[t_{k-1},t_k)} x_k$ be an arbitrary step function with $\norm{x}_{\BV} \le R$. Choose $N_1 \in \IN$ maximal with $\sum_{k=2}^{N_1} \norm{x_{k} - x_{k-1}} \le \frac{1}{2K}$. Now, choose a natural number $N_2 > N_1$ maximal with $\sum_{k=N_1 + 2}^{N_2} \norm{x_{k} - x_{k-1}} \le \frac{1}{2K}$. We iterate this procedure finitely often until we have $N_M = N$ for some $M \in \IN$. Notice that $M$ is uniformly bounded by $2RK$. In fact, assume that $M > 2RK$. Then, setting $N_0 = 0$, we obtain the contradiction
		\begin{equation*}
			\norm{x}_{BV} = \sum_{l=1}^M \sum_{k=N_{l-1}+2}^{N_l + 1} \norm{x_k - x_{k-1}} > M \cdot \frac{1}{2K} > R.
		\end{equation*}
		The result now follows by iterating estimate~\eqref{eq:linfty_estimate} or for big jumps the analogous autonomous result~\eqref{eq:trace_embedding} uniformly bounded many times.
	\end{proof}
	
	Using this in~\eqref{eq:derivative}, we get for $\dot{u}$ the a priori estimate
	\begin{equation}
		\label{eq:a_priori_derivative}
		\begin{split}
			\norm{\dot{u}}_{L^2([0,T];H)} & \lesssim \norm{\mathcal{A}}_{\BV([0,T];\mathcal{L}(V,V'))} (\norm{f}_{L^2([0,T);H)} + C) \\
			& + \norm{f}_{L^2([0,T];H)} + \norm{u_0}_V.
		\end{split}
	\end{equation}
	Recall that this estimate is based on the validity of~\eqref{eq:difference_estimate}. It follows from Proposition~\ref{prop:l2} that~\eqref{eq:difference_estimate} indeed holds if $\mathcal{A}(t)$ is parameterized in the sense of Definition~\ref{def:parameterized_kato} and if one replaces $\norm{\mathcal{A}_2 - \mathcal{A}_1}$ by the norm $\norm{x_1 - x_2}$ of their parameterizations. By the same reasoning this gives~\eqref{eq:a_priori_derivative} with $\norm{\mathcal{A}}_{\BV}$ replaced by the $\BV$-seminorm of a representation $x\colon [0,T] \to E$. The general case now follows from approximation.
	
	\begin{theorem}\label{thm:existence}
		Let $(\mathcal{P}, F_1 , F_2, E, U, O)$ be an $L^p$-trace parameterization in a UMD space $X$ for some $p \in (1, \infty)$. If $\mathcal{A}(t) = \mathcal{P}(x(t))$ for some $x(t)\colon [0,T] \to E$ with $\norm{x}_{\BV([0,T];E)} < \infty$, then for all $u_0 \in \Tr_p A$ \eqref{eq:nacp} has a solution in $\MaxReg_p^A([0,T])$.
	\end{theorem}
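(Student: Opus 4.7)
The plan is an approximation-and-compactness argument. For a step function $x$ with values in $U$ and partition $0 = t_0 < \cdots < t_N = T$, each operator $\mathcal{P}(x_k)_{|X}$ has autonomous maximal $L^p$-regularity by Remark~\ref{rem:uniform_estimate}, and all trace spaces coincide by property~\ref{trace_param:trace}; concatenating autonomous solutions piecewise, with each initial value $u(t_{k-1}) \in \Tr_p A$ by~\eqref{eq:trace_embedding}, produces $u \in \MaxReg_p^A([0,T])$. Combining Theorem~\ref{thm:a_priori_bounded} with the derivative estimate~\eqref{eq:a_priori_derivative}---whose justification rests on Proposition~\ref{prop:l2} supplying the difference bound~\eqref{eq:difference_estimate} with $\norm{x_1-x_2}_E$ in place of the operator norm---yields the uniform step-function estimate
\[
  \norm{u}_{W^{1,p}([0,T];X)} + \norm{\mathcal{P}(x(\cdot))_{|X} u}_{L^p([0,T];X)} \le C(\norm{x}_{\BV}) \bigl(\norm{f}_{L^p([0,T];X)} + \norm{u_0}_{\Tr_p A}\bigr).
\]

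For a general $x \in \BV([0,T];E)$ taking values in $U$, I would approximate by step functions $x_n$ obtained by sampling $x$ along a sequence of partitions of vanishing mesh: then $x_n(t) \in U$, $\norm{x_n}_{\BV} \le \norm{x}_{\BV}$, and $x_n(t) \to x(t)$ off the countable jump set. Let $u_n$ be the associated step solutions. Since $X$ is UMD (hence reflexive) and $F_1$ is reflexive, the uniform a priori bound---together with the pointwise domination $\norm{u_n(t)}_{F_1} \lesssim \norm{u_n(t)}_X + \norm{\mathcal{P}(x_n(t))_{|X} u_n(t)}_X$ furnished by the uniform sectoriality of $\mathcal{P}(x_n(t))$ on $F_2$ in~\ref{trace_param:r_boundedness}---permits extraction of a subsequence with $u_n \rightharpoonup u$ in $W^{1,p}([0,T];X) \cap L^p([0,T];F_1)$ and $\mathcal{P}(x_n(\cdot))_{|X} u_n \rightharpoonup w$ in $L^p([0,T];X)$. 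Passing to the limit in $\dot u_n + \mathcal{P}(x_n(\cdot))_{|X} u_n = f$ gives $\dot u + w = f$ in $L^p([0,T];X)$, while $W^{1,p}([0,T];X) \hookrightarrow C([0,T];X)$ forces $u(0) = u_0$.

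The \textbf{main obstacle} is the identification $w = \mathcal{P}(x(\cdot))_{|X} u$ almost everywhere. I would handle this by duality: for a test function $\phi \in L^{p'}([0,T]; F_2')$, the pointwise convergence $\mathcal{P}(x_n(t))^* \phi(t) \to \mathcal{P}(x(t))^* \phi(t)$ in $F_1'$ (valid wherever $x_n(t) \to x(t)$) combined with the pointwise domination $\norm{\mathcal{P}(x_n(t))^* \phi(t)}_{F_1'} \le \norm{\mathcal{P}} \cdot \norm{\phi(t)}_{F_2'}$ and dominated convergence gives strong convergence in $L^{p'}([0,T]; F_1')$; pairing with $u_n \rightharpoonup u$ in $L^p([0,T]; F_1)$ yields $\mathcal{P}(x_n(\cdot)) u_n \rightharpoonup \mathcal{P}(x(\cdot)) u$ in $L^p([0,T]; F_2)$. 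Via the embedding $X \hookrightarrow F_2$, this identifies $w = \mathcal{P}(x(\cdot)) u$ in $L^p([0,T]; F_2)$; since $w(t) \in X$ almost everywhere, we conclude that $u(t) \in D(\mathcal{P}(x(t))_{|X})$ with $w(t) = \mathcal{P}(x(t))_{|X} u(t)$, so $u \in \MaxReg_p^A([0,T])$ solves~\eqref{eq:nacp}.
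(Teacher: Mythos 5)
Your proof follows essentially the same strategy as the paper: approximate $x$ by piecewise constant functions with values in $U$ and bounded $\BV$-norm, invoke the uniform a priori estimate (from Theorem~\ref{thm:a_priori_bounded} and the parameterized version of~\eqref{eq:a_priori_derivative} via Proposition~\ref{prop:l2}), extract weak limits in $W^{1,p}([0,T];X)$, $L^p([0,T];F_1)$ and $L^p([0,T];X)$, and identify $\mathcal{P}(x(\cdot))u(\cdot)$ by testing against $L^{p'}([0,T];F_2')$ and dominated convergence. The only cosmetic difference is the approximation scheme — you sample $x$ at partition points (so membership in $U$ is automatic), whereas the paper uses interval averages and therefore invokes the convexity and closedness of $U$ at that step; both yield $\norm{x_n}_{\BV} \le \norm{x}_{\BV}$ and $x_n \to x$ a.e.\ and feed into the identical compactness-and-duality endgame.
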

	\begin{proof}
		Note that $x$ is bounded and measurable. For every $n \in \IN$ consider the piecewise constant approximations 
		\begin{equation*}
			x_n(t) = \sum_{k=0}^{n-1} n \int_{\frac{k}{n}}^{\frac{k+1}{n}} x(s) \d s \mathds{1}_{[\frac{k}{n}, \frac{k+1}{n})}(t).
		\end{equation*}
		Since $U$ is convex and closed, the approximations $x_n$ take values in $U$ as well. Further, we have for $k \in \{0, \ldots, n-2\}$ 
		\begin{align*}
			\sum_{k=0}^{n-1} \biggnorm{x_n\biggl(\frac{k+1}{n}\biggr) - x_n \biggl(\frac{k}{n} \biggr)} = n \int_0^{\frac{1}{n}} \sum_{k=0}^{n-1} \biggnorm{x \biggl(s+\frac{k+1}{n}\biggr) - x \biggl(s+\frac{k}{n}\biggr)} \d s.
		\end{align*}
		Consequently, $\norm{x_n}_{\BV} \le \norm{x}_{\BV}$. Further, by Lebesgue's differentiation theorem $x_n(t) \to x(t)$ almost everywhere on $[0,T)$. It follows from the parameterized variant of~\eqref{eq:a_priori_derivative} that the solutions $u_n$ of~\eqref{eq:nacp} for $\mathcal{A}_n(t) = \mathcal{P}(x_n(t))$ and initial value $u_0$ satisfy the uniform estimate
		\begin{equation*}
			\norm{\mathcal{A}_n(\cdot) u_n(\cdot)}_{L^p([0,T];X)} + \norm{\dot{u}_n}_{L^p([0,T];X)} \lesssim \norm{f}_{L^p([0,T];X)} + \norm{u_0}_{\Tr_p A}.
		\end{equation*}
		Hence, after passing to a subsequence we may assume that $\mathcal{A}_n(\cdot) u_n(\cdot)$ and $\dot{u}_n$ converge weakly in $L^p([0,T];X)$ and that $u_n$ converges weakly to some $u \in L^p([0,T];F_1)$. Then $u \in W^{1,p}([0,T];X)$ and $\dot{u}$ agrees with the weak limit of $(\dot{u}_n)_{n \in \IN}$. Now, testing~\eqref{eq:nacp} against $g \in L^{p'}([0,T];F_2')$ we have 
		\begin{equation}
			\label{eq:weak_solution}
			\int_0^T \langle \dot{u}_n(t), g(t) \rangle_{F_2, F_2'} \d t + \int_0^T \langle \mathcal{A}_n(t) u_n(t), g(t) \rangle_{F_2, F_2'} \d t = \int_0^T \langle f(t), g(t) \rangle \d t.
		\end{equation}
		We have $\mathcal{A}_n'(\cdot) g(\cdot) \to \mathcal{A}'(\cdot) g(\cdot)$ in $L^{p'}([0,T];F_2')$. Hence, using duality in~\eqref{eq:weak_solution} and passing to the limit, we have $\dot{u} + \mathcal{A}(t)u(t) = f(t)$ and $\mathcal{A}(\cdot)u(\cdot) \in L^p([0,T];X)$.
	\end{proof}
	
\section{Uniqueness}

	We now come to the uniqueness of solutions. We are not able to prove a general uniqueness result in the setting of Theorem~\ref{thm:existence}. Nevertheless, for many applications we can rely on existing results. We now introduce the necessary terminology. Let $X$ be a Banach space. For an element $x \in X$ we define its duality set as
	\begin{equation*}
		\mathcal{J}(x) \coloneqq \{ x' \in X': \norm{x'} = 1, \langle x', x \rangle = \norm{x} \}.
	\end{equation*}
	By the Hahn--Banach theorem this set is always non-empty. Recall that an operator $A\colon D(A) \to X$ is \emph{accretive} if for all $x \in D(A)$ there exists some $x' \in \mathcal{J}(x)$ with $\Re \langle x', Ax \rangle \ge 0$. If $X$ is a Hilbert space, this is equivalent to the well-known condition $(Ax|x) \ge 0$ for all $x \in D(A)$. The following result is obtained by repeating the proof of~\cite[Proposition~3.2]{ACFP07} word by word.

	\begin{theorem}\label{thm:uniqueness}
		Let $(A(t))_{t \in [0,T)}$ be accretive operators on some Banach space $X$. For $p \in (1, \infty)$ suppose that $u_1, u_2 \in \MaxReg_p^A([0,T])$ solve~\eqref{eq:nacp}. Then $u_1  = u_2$.
	\end{theorem}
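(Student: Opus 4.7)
The plan is to set $w = u_1 - u_2$ and prove $w \equiv 0$ by showing that $t \mapsto \norm{w(t)}_X$ is nonincreasing on $[0,T]$. The starting data are that $w \in \MaxReg_p^A([0,T])$, $w(0) = 0$, $w(t) \in D(A(t))$ for a.e.\ $t$, and $\dot{w}(t) + A(t) w(t) = 0$ a.e. Since $w \in W^{1,p}([0,T]; X) \hookrightarrow C([0,T]; X)$, the real-valued map $\norm{w(\cdot)}_X$ is absolutely continuous on $[0,T]$. Once it is shown to be nonincreasing, the initial condition $w(0) = 0$ will force $w \equiv 0$.

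The main analytic input I would use is the classical reformulation of accretivity: for every $x \in D(A(t))$ and every $\lambda > 0$,
\[
    \norm{x + \lambda A(t) x}_X \ge \norm{x}_X.
\]
This is obtained at once by pairing $x + \lambda A(t) x$ with a duality element $x' \in \mathcal{J}(x)$ realising $\Re \langle x', A(t) x \rangle \ge 0$; indeed, $\norm{x + \lambda A(t) x}_X \ge \Re \langle x', x + \lambda A(t) x \rangle \ge \norm{x}_X$.

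The key step is then a pointwise lower bound on $\norm{w(t-h)}_X$. I would fix $t \in (0,T)$ that is simultaneously a Lebesgue point of $s \mapsto A(s) w(s) \in L^p([0,T]; X)$ and at which $\dot{w}(t) = -A(t) w(t)$ holds; almost every $t \in (0,T)$ enjoys both properties. For $h > 0$ small,
\[
    w(t-h) = w(t) - \int_{t-h}^t \dot{w}(s) \d s = w(t) + h A(t) w(t) + r(t,h),
\]
where $\norm{r(t,h)}_X = o(h)$ by vector-valued Lebesgue differentiation applied to $A(\cdot) w(\cdot)$. The accretivity estimate with $\lambda = h$ together with the triangle inequality then give
\[
    \norm{w(t-h)}_X \ge \norm{w(t) + h A(t) w(t)}_X - \norm{r(t,h)}_X \ge \norm{w(t)}_X - o(h),
\]
so the lower left Dini derivative of $\norm{w(\cdot)}_X$ is nonpositive a.e.

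Since $\norm{w(\cdot)}_X$ is absolutely continuous, the a.e.\ nonpositivity of its lower left Dini derivative forces it to be nonincreasing by the classical Dini monotonicity criterion for AC functions on $\IR$. Combined with $w(0) = 0$, this yields $w \equiv 0$. The step I expect to be the most delicate is arranging for the good set of $t$ — Lebesgue points of $A(\cdot) w(\cdot)$ combined with points where the strong derivative satisfies $\dot{w}(t) = -A(t) w(t)$ — to have full measure in $[0,T]$ so that the above estimate genuinely holds at a.e.\ $t$; the individual ingredients are standard, but their assembly must be carried out cleanly.
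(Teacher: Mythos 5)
Your proof is correct; the paper itself supplies no argument here but simply refers to \cite[Proposition~3.2]{ACFP07}, and your reasoning — set $w = u_1 - u_2$, derive $\norm{x + \lambda A(t)x} \ge \norm{x}$ from accretivity via a duality element, expand $w(t-h)$ at Lebesgue points of $A(\cdot)w(\cdot)$, and conclude that the absolutely continuous map $t \mapsto \norm{w(t)}_X$ is nonincreasing — reconstructs essentially that cited argument. One minor remark: your estimate actually bounds the \emph{upper} left Dini derivative by $0$ a.e., and since $\norm{w(\cdot)}_X$ is absolutely continuous it is differentiable a.e.\ with all Dini derivatives coinciding with the classical one, so $(\norm{w(\cdot)}_X)' \le 0$ a.e.\ and the fundamental theorem of calculus yields $\norm{w(t)}_X \le \norm{w(0)}_X = 0$ directly, with no need to invoke a separate Dini monotonicity criterion.
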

	
\section{Main Results}
	
	From our general results we obtain several concrete results. Note that if $A$ is induced by a form, then $A$ is accretive. For general forms Theorem~\ref{thm:existence}, Theorem~\ref{thm:uniqueness} and Corollary~\ref{ex:form} therefore give the following.
	
	\begin{corollary}[Maximal Regularity for Forms of Bounded Variation]\label{cor:elliptic_operator}
		Let $V, H$ be complex Hilbert spaces with dense embeddings $V \hookrightarrow H$ and $a\colon [0,T] \times V \times V \to \IC$ a non-autonomous form satisfying~\eqref{eq:form_assumptions} and $\norm{\mathcal{A}(\cdot)}_{\BV([0,T];\mathcal{L}(V,V'))} < \infty$. Then~\eqref{eq:nacp} has maximal $L^p$-regularity for all $p \in (1, 2)$ and all $u_0 \in (H,V)_{2-2/p,p}$.
	\end{corollary}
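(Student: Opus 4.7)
The plan is to verify that the hypotheses of Theorem~\ref{thm:existence} and Theorem~\ref{thm:uniqueness} hold, using Example~\ref{ex:form} as the bridge that exhibits the required $L^p$-trace parameterization.

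First I would fix $p \in (1,2)$ and identify the operator function $\mathcal{A}(\cdot)$ with its own parameter. By assumption~\eqref{eq:form_assumptions}, for every $t \in [0,T]$ the operator $\mathcal{A}(t) \in \mathcal{L}(V,V')$ lies in $\mathcal{F}(\alpha, M)$ in the notation of Example~\ref{ex:form}. Taking the trivial parameterization $\mathcal{P} = \Id_{\mathcal{L}(V,V')}$ with $U = \mathcal{F}(\alpha,M)$ and $O = \mathcal{F}(\alpha/2, M + \alpha/2)$, Example~\ref{ex:form} gives an $L^p$-trace parameterization on the UMD space $X = H$ with trace space $\Tr_p A = (H,V)_{2-2/p,p}$. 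Thus $x(t) \coloneqq \mathcal{A}(t)$ is a function $[0,T] \to E = \mathcal{L}(V,V')$ taking values in $U$, and the hypothesis $\norm{\mathcal{A}(\cdot)}_{\BV([0,T];\mathcal{L}(V,V'))} < \infty$ is precisely $\norm{x}_{\BV([0,T];E)} < \infty$. Theorem~\ref{thm:existence} therefore yields, for each $u_0 \in (H,V)_{2-2/p,p}$ and each $f \in L^p([0,T];H)$, a solution $u \in \MaxReg_p^A([0,T])$ of~\eqref{eq:nacp}.

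For uniqueness I would invoke Theorem~\ref{thm:uniqueness}. Since $A(t)$ is the part in $H$ of the operator induced by the form $a(t,\cdot,\cdot)$, for every $u \in D(A(t))$ one has $(A(t)u \mid u)_H = a(t,u,u)$, so coercivity gives $\Re(A(t)u\mid u)_H \ge \alpha \norm{u}_V^2 \ge 0$. In a Hilbert space this is exactly accretivity (the duality set of $u$ reduces to $\{u/\norm{u}_H\}$), so Theorem~\ref{thm:uniqueness} applies to any two solutions in $\MaxReg_p^A([0,T])$.

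The work is essentially a verification exercise rather than genuinely new input, so I do not anticipate a serious obstacle. The only point requiring a small amount of care is matching the data of the parameterization from Example~\ref{ex:form} to the data in Definition~\ref{def:parameterized_kato} with the right constants $(\alpha,M)$, but this is already carried out in that example and carries over verbatim.
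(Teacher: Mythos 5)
Your proposal is correct and matches the paper's argument exactly: the paper derives the corollary by combining Theorem~\ref{thm:existence}, Theorem~\ref{thm:uniqueness} and Example~\ref{ex:form}, together with the observation that operators induced by coercive forms are accretive. You simply spell out the routine verification (identifying the parameterization data from Example~\ref{ex:form}, checking accretivity via the Hilbert-space duality set) that the paper leaves implicit.
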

	
	Note that by Dier's counterexample the above result does not extend to exponents $p \ge 2$. However, Example~\ref{ex:trace_elliptic} gives maximal $L^2$-regularity for elliptic operators.
	
	\begin{corollary}[Elliptic Operators -- Full space case]
		Let $a_{ij}\colon [0,T] \times \IR^n \to \IC$ for $i, j = 1, \ldots, n$ be measurable coefficients such that for $M, \epsilon > 0$ the following holds.
		\begin{thm_enum}
			\item $\abs{a_{ij}(t,x)} \le M$ for almost every $(t,x) \in [0,T] \times \IR^n$ and all $i,j = 1, \ldots, n$.
			\item $\Re \sum_{i,j=1}^n a_{ij}(t,x) \xi_i \xi_j \ge \epsilon \abs{\xi}^2$ for almost every $(t,x) \in [0,T] \times \IR^n$ and all $\xi \in \IC^n$.
			\item $t \mapsto a_{ij}(t,\cdot) \in \BV([0,T];L^{\infty}(\IR^n))$ for all $i,j = 1, \ldots, n$.
		\end{thm_enum}
		Then one has maximal $L^2$-regularity for the non-autonomous form $a\colon [0,T] \times H^1(\IR^n) \times H^1(\IR^n) \to \IC$ given by
		\begin{equation*}
			a(t,u,v) = \sum_{i,j=1}^n \int_{\IR^n} a_{ij}(t,x) \partial_i u(x) \overline{\partial_j v(x)} \d x.
		\end{equation*} 
	\end{corollary}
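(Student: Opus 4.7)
The plan is to apply Theorem~\ref{thm:existence} and Theorem~\ref{thm:uniqueness} with the $L^2$-trace parameterization of $\Ell(\epsilon,M)$ furnished by Example~\ref{ex:trace_elliptic}. Setting $x(t) \coloneqq (a_{ij}(t,\cdot))_{i,j=1}^n$, assumptions (a) and (b) give $x(t) \in C(\epsilon, M)$ for almost every $t \in [0,T]$, and by construction $\mathcal{A}(t) = \mathcal{P}(x(t))$ with $\mathcal{P}$ the parameterization of Example~\ref{ex:trace_elliptic}. Hence the family $(A(t))_{t \in [0,T]}$ is $L^2$-parameterized in the sense of Definition~\ref{def:parameterized_kato}, and the crucial property~\ref{trace_param:trace} is supplied by the Kato square root theorem, which identifies the common trace space $\Tr_2 A$ (with uniformly equivalent norms) with $H^1(\IR^n)$.

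Next I would check that $x \in \BV([0,T]; L^{\infty}(\IR^n; \IC^{n\times n}))$. This is where assumption (c) enters: the $\BV$-norm of a matrix-valued function in any operator norm on $\IC^{n \times n}$ is equivalent to the sum of the $\BV$-norms of its $n^2$ entries in $L^{\infty}(\IR^n)$, so the componentwise assumption transfers to the matrix-valued parameterization. Theorem~\ref{thm:existence} (applied with $p=2$ and $X = L^2(\IR^n)$, which is UMD) then produces, for every $f \in L^2([0,T]; L^2(\IR^n))$ and every $u_0 \in \Tr_2 A \simeq H^1(\IR^n)$, a solution $u \in \MaxReg_2^A([0,T])$ of~\eqref{eq:nacp}.

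For uniqueness I would invoke Theorem~\ref{thm:uniqueness}. Coerciveness of the forms gives, for $u \in D(A(t))$, the identity $(A(t)u \mid u)_{L^2} = a(t,u,u)$ with $\Re a(t,u,u) \ge \epsilon \norm{u}_{H^1}^2 \ge 0$, so each $A(t)$ is accretive on the Hilbert space $L^2(\IR^n)$. Combining existence and uniqueness is exactly maximal $L^2$-regularity in the sense of the introduction.

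Since the heavy lifting is already contained in Example~\ref{ex:trace_elliptic} (the Kato square root problem, giving the $t$-independent trace space and the crucial property~\ref{trace_param:trace}) and in Theorem~\ref{thm:existence} (the $\BV$-iteration of Theorem~\ref{thm:a_priori_bounded} together with the parameterized version of the Hilbert space difference estimate~\eqref{eq:difference_estimate}), the only genuinely new step at this stage is the bookkeeping verification that the measurable, bounded, uniformly elliptic, entrywise-$\BV$ coefficients $(a_{ij}(t,x))$ give rise to a $\BV$-parameterization with values in $C(\epsilon, M)$. I therefore do not expect any serious obstacle beyond this routine check.
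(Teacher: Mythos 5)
Your proposal follows exactly the paper's (implicit) argument: instantiate the $L^2$-trace parameterization of $\Ell(\epsilon,M)$ from Example~\ref{ex:trace_elliptic} with $x(t)=(a_{ij}(t,\cdot))$, transfer the entrywise $\BV$-assumption to $x\in\BV([0,T];L^\infty(\IR^n;\IC^{n\times n}))$, and combine Theorem~\ref{thm:existence} with Theorem~\ref{thm:uniqueness} using accretivity from coercivity. This is correct and matches the paper's route.
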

	
	For mixed boundary conditions we imitate Example~\ref{ex:trace_elliptic} and use~\cite[Theorem~1]{AKM06}. Note that one can add lower order terms as expected.
	
	\begin{corollary}[Mixed Boundary Conditions]
		Let $\Omega \subset \IR^n$ be a smooth domain with either $\Omega$ or $\Omega^c$ bounded. Further let $\Sigma \subset \partial \Omega$ be open and $a_{ij}\colon [0,T] \times \IR^n \to \IC$ for $i, j = 0, \ldots, n$ be measurable such that for $M, \epsilon > 0$ the following holds.
		\begin{thm_enum}
			\item $\abs{a_{ij}(t,x)} \le M$ for almost every $(t,x) \in [0,T] \times \IR^n$ and all $i,j = 0, \ldots, n$.
			\item $\Re \sum_{i,j=1}^n a_{ij}(t,x) \xi_i \xi_j \ge \epsilon \abs{\xi}^2$ for almost every $(t,x) \in [0,T] \times \IR^n$ and all $\xi \in \IC^n$.
			\item $t \mapsto a_{ij}(t,\cdot) \in \BV([0,T];L^{\infty}(\IR^n))$ for all $i,j = 0, \ldots, n$.
		\end{thm_enum}
		With all traces interpreted in the Sobolev sense consider the form domain
		\begin{equation*}
			V = \{ u \in H^1(\Omega): \supp u_{|\partial \Omega} \subset \overline{\Sigma} \}.
		\end{equation*}
		Under the above assumptions we have maximal $L^2$-regularity for the non-autonomous form $a\colon [0,T] \times V \times V \to \IC$ given by
		\begin{align*}
			a(t,u,v) & = \sum_{i,j=1}^n \int_{\IR^n} a_{ij}(t,x) \partial_i u(x) \overline{\partial_j v(x)} + a_{j0}(t,x) u(x) \overline{\partial_j v(x)} \\
			& + a_{0k}(t,x) \partial_k u(x) \overline{v(x)} + a_{00}(t,x) u(x) \overline{v(x)} \d x.
		\end{align*}
	\end{corollary}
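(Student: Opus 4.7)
The plan is to reduce the corollary to Theorem~\ref{thm:existence} combined with Theorem~\ref{thm:uniqueness}, by exhibiting the family $\mathcal{A}(t)$ as being $L^2$-parameterized in the sense of Definition~\ref{def:parameterized_kato} in complete analogy with Example~\ref{ex:trace_elliptic}. The only essential new ingredient will be the Kato square root theorem for mixed boundary conditions of Axelsson--Keith--McIntosh~\cite[Theorem~1]{AKM06} in place of~\cite[Theorem~6.1]{AHL+02}.

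First I would perform a standard preliminary reduction: by substituting $u(t) = e^{-\lambda t}\tilde{u}(t)$ for $\lambda > 0$ sufficiently large (depending only on $M$), the lower-order coefficients can be absorbed so that the modified form satisfies~\eqref{eq:form_assumptions} uniformly in $t$ on $V$ with constants depending only on $M$ and $\epsilon$. Since this substitution preserves maximal $L^2$-regularity on $[0,T]$, I may assume~\eqref{eq:form_assumptions}. Next I would set $E = L^{\infty}(\Omega;\IC^{(n+1)\times(n+1)})$, $F_1 = V$, $F_2 = V'$, and define $\mathcal{P}\colon E \to \mathcal{L}(V,V')$ by sending a coefficient matrix $(a_{ij})_{0 \le i,j \le n}$ to the induced form operator. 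The set $U$ consists of those coefficients satisfying the stated boundedness and ellipticity inequalities (and the enlarged coercivity from the reduction); it is convex and closed. The set $O$ is a slight enlargement (as in Example~\ref{ex:trace_elliptic}) chosen to ensure $\dist(U, O^c) > 0$, which takes care of property~\ref{trace_param:distance}.

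Property~\ref{trace_param:r_boundedness} then follows exactly as in Example~\ref{ex:trace_elliptic}: on the Hilbert space $L^2(\Omega)$, $\mathcal{R}$-boundedness coincides with norm boundedness, and the resolvent estimates for operators induced by uniformly bounded and coercive sesquilinear forms depend solely on the constants $\epsilon$ and $M$. The main obstacle is property~\ref{trace_param:trace}. For the purely second-order part, \cite[Theorem~1]{AKM06} delivers the Kato square root property $D(A^{1/2}) = V$ with norm equivalence constants depending only on $\epsilon$, $M$, $\Omega$ and $\Sigma$. The lower-order terms constitute a form perturbation bounded on $V \times H$ and $H \times V$, hence of order strictly less than one in the domain scale; together with the uniform bound $\normalnorm{A^{it}} \le e^{\frac{\pi}{2}\abs{t}}$ valid for any operator induced by a bounded coercive form~\cite[Theorem~4.29]{Lun09}, the reasoning of Example~\ref{ex:form} (combining Kato's comparison result~\cite[Theorem~3.1]{Kat61b} with the reiteration theorem applied to the self-adjoint real part) identifies the fractional domain of the full operator with $V$ uniformly. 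Invoking~\cite[Proposition~2.5]{Fac15c} as in Example~\ref{ex:trace_elliptic}, this upgrades to $(L^2(\Omega), D(A))_{1/2,2} = V$ with uniform constants, establishing~\ref{trace_param:trace}.

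With the parameterization in place and $\norm{x}_{\BV([0,T];E)} < \infty$ (equivalent to the assumed $\BV$-regularity of each $a_{ij}$), Theorem~\ref{thm:existence} applied with $p = 2$ produces a solution $u \in \MaxReg_2^A([0,T])$ for every initial value $u_0 \in (L^2(\Omega), V)_{1/2,2}$. Uniqueness is obtained from Theorem~\ref{thm:uniqueness}: each $A(t)$ is accretive since it arises from the coercive form, so any two solutions in $\MaxReg_2^A([0,T])$ coincide. I expect the technical bottleneck to be the careful uniform tracking of constants in the Kato square root identification on $V$ in the presence of the lower-order terms and mixed boundary; this is where the geometric assumptions on $\Omega$ and $\Sigma$ enter, through the scope of~\cite[Theorem~1]{AKM06}.
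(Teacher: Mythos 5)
Your proposal follows essentially the same route as the paper, which simply instructs the reader to imitate Example~\ref{ex:trace_elliptic} with~\cite[Theorem~1]{AKM06} in place of~\cite[Theorem~6.1]{AHL+02} and to add lower order terms as expected; your explicit rescaling $u(t) = e^{-\lambda t}\tilde{u}(t)$ to restore coercivity and the accretivity argument for uniqueness are exactly what is implicit there. One small correction: when absorbing the lower order terms you appeal to ``the reasoning of Example~\ref{ex:form}'' (Kato's comparison result plus reiteration via the self-adjoint real part), but that argument is calibrated for $p < 2$ and deliberately stops short of the endpoint $\alpha = 1/2$ needed for $\Tr_2 A$; the correct pattern here, still in the spirit of Example~\ref{ex:trace_elliptic}, is to note that the lower order form is bounded from $V$ to $H$, so $D(A) = D(A_0)$ with uniform norm equivalence (by relative bound zero and coercivity), and then $D(A^{1/2}) = [H,D(A)]_{1/2} = [H,D(A_0)]_{1/2} = D(A_0^{1/2}) = V$ using bounded imaginary powers~\cite[Theorem~4.29]{Lun09} together with~\cite[Proposition~2.5]{Fac15c}, after which the identification of $(H,D(A))_{1/2,2}$ proceeds as in Example~\ref{ex:trace_elliptic}.
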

	
	More generally, the above result holds for bi-Lipschitz images of the above geometric configuration. The Kato square root property for mixed boundary conditions is in fact known for more general domains~\cite{EgeHalTol14}. However, these results do not state the dependence on the constants explicitly. Further, \cite[Theorem~1.3]{APM01} gives an analogous result for higher order elliptic systems.
	
	\begin{corollary}[Higher Order Elliptic Systems]
		Let $N \in \IN$ and $m \ge 2$. Let $a_{\alpha \beta} = (a_{\alpha \beta}^{ij})_{1 \le i,j \le N} \colon [0,T] \times \IR^n \to \IC^N \times \IC^N$ for multi-indices $\alpha, \beta \in \IN_0^n$ with $\abs{\alpha} = \abs{\beta} = m$ be measurable coefficients such that for $M, \lambda > 0$ the following holds.
		\begin{thm_enum}
			\item $\normalabs{a_{\alpha \beta}^{ij}(t,x)} \le M$ for almost every $(t,x) \in [0,T] \times \IR^n$ and all $i,j = 1, \ldots, n$ and $\abs{\alpha} = \abs{\beta} = m$.
			\item 
			For the self-adjoint part $b_{\alpha \beta} = \frac{1}{2} (a_{\alpha \beta} + a_{\beta \alpha}^*)$ one has
			\begin{equation*}
				\Re \sum_{\substack{\abs{\alpha} = \abs{\beta} = m \\ 1 \le i, j \le N}} b_{\alpha \beta}^{ij}(t,x) \xi_{\beta,i} \overline{\xi_{\alpha, i}} \ge \lambda \sum_{\substack{\abs{\alpha} = m \\ 1 \le i \le N}} \abs{\xi_{\alpha,i}}^2
			\end{equation*}
			for almost every $(t,x) \in [0,T] \times \IR^n$ and all $\xi_{\alpha,i} \in \IC$.
			\item $t \mapsto a_{\alpha \beta}^{ij}(t,\cdot) \in \BV([0,T];L^{\infty}(\IR^n))$ for all $i,j = 1, \ldots, n$ and $\abs{\alpha} = \abs{\beta} = m$.
		\end{thm_enum}
		Then one has maximal $L^2$-regularity for the non-autonomous form $a\colon [0,T] \times H^1(\IR^n;\IC^N) \times H^1(\IR^n;\IC^N) \to \IC$ given by
		\begin{equation*}
			a(t,u,v) = \sum_{\abs{\alpha} = \abs{\beta} = m} \int_{\IR^n} a_{\alpha \beta}(t,x) \partial^{\beta} u(x) \overline{\partial^{\alpha} v(x)} \d x.
		\end{equation*}
	\end{corollary}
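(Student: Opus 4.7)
The plan is to mirror the strategy of Example~\ref{ex:trace_elliptic}, only replacing the second order divergence form setting by the higher order system setting. Set $X = L^2(\IR^n;\IC^N)$, $F_1 = H^m(\IR^n;\IC^N)$, $F_2 = H^{-m}(\IR^n;\IC^N)$, and let $E$ be the Banach space of coefficient tuples $(a_{\alpha\beta})_{|\alpha|=|\beta|=m}$ in $L^{\infty}(\IR^n;\IC^{N\times N})$ with the supremum norm. Define $\mathcal{P}\colon E \to \mathcal{L}(F_1,F_2)$ by sending a coefficient tuple to the operator induced by the form on the right-hand side of the corollary's display, possibly with a fixed large shift $\mu\cdot\Id$ added so that Gårding's inequality yields uniform coercivity on $H^m$ (the shift does not affect maximal regularity). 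Take $U$ to be the tuples satisfying the assumed bounds with constants $(M,\lambda)$ and $O$ a slight enlargement, e.g.\ constants $(2M,\lambda/2)$, so that property~\ref{trace_param:distance} is immediate.

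Next I would verify property~\ref{trace_param:r_boundedness}. On the Hilbert space $X$, $\mathcal{R}$-boundedness reduces to uniform boundedness, and the sectorial estimates for operators induced by bounded coercive sesquilinear forms depend only on the ellipticity constants $M$ and $\lambda$; this is exactly as in Example~\ref{ex:trace_elliptic} and the higher order case works identically. Property~\ref{trace_param:trace} is the Kato square root property, which in this context is provided by~\cite[Theorem~1.3]{APM01}. As in Example~\ref{ex:trace_elliptic}, one combines this with~\cite[Proposition~2.5]{Fac15c} and the bound $\normalnorm{A^{it}} \le e^{\frac{\pi}{2}\abs{t}}$ from~\cite[Theorem~4.29]{Lun09} to obtain the uniform equivalence $[X,D(A)]_{1/2} = H^m(\IR^n;\IC^N)$ with constants depending only on $M$, $\lambda$, $m$, $n$, $N$, hence $\Tr_2 A$ is independent of the coefficients up to uniform equivalence.

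With the parameterization established, the hypothesis that $t \mapsto a_{\alpha\beta}^{ij}(t,\cdot) \in \BV([0,T];L^{\infty}(\IR^n))$ for all multi-indices and indices implies $\norm{x}_{\BV([0,T];E)} < \infty$, where $x(t)$ is the tuple of coefficients at time $t$. Theorem~\ref{thm:existence} then yields existence of a solution in $\MaxReg_2^A([0,T])$. Uniqueness follows from Theorem~\ref{thm:uniqueness} since operators induced by coercive sesquilinear forms on $H^m$ are accretive on $L^2$.

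The main obstacle is ensuring that the constants in the Kato square root theorem of~\cite{APM01} depend only on $M$, $\lambda$, $m$, $n$, $N$ and not on the individual coefficients; this is essential for property~\ref{trace_param:trace} and is the same sort of uniformity issue flagged by the paper in the discussion preceding the statement (for mixed boundary conditions). In the full-space setting of~\cite{APM01} this uniformity is either explicit or can be extracted from the proof; otherwise one could adapt the scaling/perturbation argument from~\cite{Fac15c} to transfer qualitative norm equivalences into quantitative ones depending only on the ellipticity data.
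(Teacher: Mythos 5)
Your proposal is correct and mirrors exactly what the paper intends: construct an $L^2$-trace parameterization on $E = L^\infty(\IR^n;\IC^{N\times N})$ with $F_1 = H^m(\IR^n;\IC^N)$ (note the paper's display misprints the form domain as $H^1$), verify~\ref{trace_param:distance} and~\ref{trace_param:r_boundedness} exactly as in Example~\ref{ex:trace_elliptic}, obtain~\ref{trace_param:trace} from the Kato square root theorem of~\cite[Theorem~1.3]{APM01}, and then invoke Theorems~\ref{thm:existence} and~\ref{thm:uniqueness}. You also correctly flag the only genuinely delicate point, namely that the constants in the higher order Kato theorem must depend only on $(M,\lambda,m,n,N)$, which is precisely the uniformity caveat the paper itself raises in the surrounding discussion.
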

	
\section{A priori Boundedness of the Solutions in the Trace Space}\label{sec:a_priori_bounded}

	In this subsection we prove the propositions used in Theorem~\ref{thm:a_priori_bounded}. We do this by treating the two summands in~\eqref{eq:difference} separately. As all following estimates are based on the same key ideas, we only give a detailed account once and will concentrate on the differences in the later proofs.
	
	\begin{proposition}\label{prop:linfty_inhomogenity}
		Let $(\mathcal{P}, F_1 , F_2, E, U, O)$ be an $L^p$-trace parameterization in a UMD-space $X$. If $u_{\mathcal{A}}$ solves $\dot{u} + \mathcal{A}u = f$ for $u_0 = 0$, then uniformly in $T \in (0, \infty)$
			\begin{align*}
				\normalnorm{u_{\mathcal{P}(x)} - u_{\mathcal{P}(y)}}_{C([0,T];\Tr_p A)} \lesssim \norm{x-y}_E \norm{f}_{L^p([0,T];H)} \qquad \text{for all } x,y \in U.
			\end{align*} 
	\end{proposition}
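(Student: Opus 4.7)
Set $w = u_{\mathcal{P}(x)} - u_{\mathcal{P}(y)}$. Subtracting the two Cauchy problems and using the linearity of $\mathcal{P}$ gives
\[
	\dot{w} + \mathcal{P}(y) w = \mathcal{P}(y-x)\, u_{\mathcal{P}(x)}, \qquad w(0) = 0,
\]
so, by the variation of constants formula interpreted in $F_2$,
\[
	w(t) = \int_0^t e^{-(t-s)\mathcal{P}(y)}\, \mathcal{P}(y-x)\, u_{\mathcal{P}(x)}(s)\, ds.
\]
Since condition~\ref{trace_param:trace} makes $\Tr_p \mathcal{P}(y)_{|X}$ a single space $\Tr_p$ with norm uniformly equivalent in $y \in O$, it suffices to bound $w(t)$ in the $\Tr_p \mathcal{P}(y)_{|X}$-norm uniformly in $y \in U$. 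The plan is to view $w$ as the solution of the autonomous Cauchy problem with generator $\mathcal{P}(y)_{|X}$ and initial value $0$, and then apply the uniform autonomous trace embedding~\eqref{eq:trace_embedding} from Remark~\ref{rem:uniform_estimate}.

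The obstacle is that $\mathcal{P}(y-x) u_{\mathcal{P}(x)}(s)$ only lies in $F_2$, whereas~\eqref{eq:trace_embedding} requires an inhomogeneity in $L^p([0,T];X)$. To close this gap, I would split $\mathcal{P}(y-x) = \mathcal{P}(y) - \mathcal{P}(x)$. The $\mathcal{P}(x)$-half becomes $\mathcal{P}(x) u_{\mathcal{P}(x)} = f - \dot u_{\mathcal{P}(x)} \in L^p([0,T];X)$ by Remark~\ref{rem:uniform_estimate}. The $\mathcal{P}(y)$-half is handled by integrating by parts in $s$ via $\mathcal{P}(y)\, e^{-(t-s)\mathcal{P}(y)} = \partial_s e^{-(t-s)\mathcal{P}(y)}$, giving
\[
	\int_0^t e^{-(t-s)\mathcal{P}(y)}\, \mathcal{P}(y)\, u_{\mathcal{P}(x)}(s)\, ds = u_{\mathcal{P}(x)}(t) - \int_0^t e^{-(t-s)\mathcal{P}(y)}\, \dot u_{\mathcal{P}(x)}(s)\, ds,
\]
where $\dot u_{\mathcal{P}(x)} \in L^p([0,T];X)$ as well. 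The source of $w$ is thereby expressed through $X$-valued integrands plus a boundary contribution $u_{\mathcal{P}(x)}(t) \in \Tr_p$ that is uniformly controlled by $\norm{f}_{L^p([0,T];X)}$.

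The genuinely hard step is to preserve the linear factor $\norm{x-y}_E$ after this reduction, since the boundary term $u_{\mathcal{P}(x)}(t)$ does not vanish as $x \to y$. The correct way to extract the smallness is to combine the above representation with its symmetric counterpart obtained by swapping $x$ and $y$, and to exploit the Fubini identity
\[
	\int_r^t e^{-(t-s)\mathcal{P}(y)}\, \mathcal{P}(y-x)\, e^{-(s-r)\mathcal{P}(x)}\, ds = e^{-(t-r)\mathcal{P}(x)} - e^{-(t-r)\mathcal{P}(y)},
\]
so that the boundary contributions cancel and the remaining terms all carry a visible factor $\mathcal{P}(y-x)$. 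The desired $\norm{x-y}_E$-dependence then follows from an $\mathcal{R}$-boundedness estimate on $X$ for the resolvent family $\{\mathcal{P}(y-x)(\lambda - \mathcal{P}(y))^{-1} : x, y \in U,\ \abs{\arg \lambda} \ge \phi\}$, which is where the UMD structure of $X$ (via the operator-valued Mihlin multiplier theorem already invoked in Remark~\ref{rem:uniform_estimate}) together with the uniform $\mathcal{R}$-sectoriality~\ref{trace_param:r_boundedness} and the Kato-type trace-space equivalence~\ref{trace_param:trace} enter essentially.
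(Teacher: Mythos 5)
Your starting point---subtracting the two Cauchy problems, getting $\dot w + \mathcal{P}(y)w = \mathcal{P}(y-x)\,u_{\mathcal{P}(x)}$ and writing Duhamel's formula---is the natural first move, and you correctly identify the central obstacle: the source term $\mathcal{P}(y-x)\,u_{\mathcal{P}(x)}$ lives only in $F_2$, not in $X$, so the autonomous maximal regularity / trace embedding of Remark~\ref{rem:uniform_estimate} is not directly applicable, and naively splitting $\mathcal{P}(y-x) = \mathcal{P}(y)-\mathcal{P}(x)$ destroys the smallness. But your proposed fix does not close the gap. The symmetrization plus the ``Fubini identity'' is just the semigroup difference formula $e^{-t\mathcal{P}(x)} - e^{-t\mathcal{P}(y)} = \int_0^t e^{-(t-s)\mathcal{P}(y)}\mathcal{P}(y-x)e^{-s\mathcal{P}(x)}\,ds$, which is essentially where you started; after the alleged cancellation one is still left with integrals whose integrands carry $\mathcal{P}(y-x)$ applied to an $F_1$-valued quantity, landing in $F_2$ rather than $X$. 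Worse, the $\mathcal{R}$-bound you invoke in the last sentence for the family $\{\mathcal{P}(y-x)(\lambda-\mathcal{P}(y)_{|X})^{-1}\}$ ``on $X$'' is not well posed: $(\lambda-\mathcal{P}(y)_{|X})^{-1}$ maps $X$ into $D(\mathcal{P}(y)_{|X})\subset F_1$, and $\mathcal{P}(y-x)$ then maps $F_1$ into $F_2$, with no reason for the image to lie in $X$. None of the axioms \ref{trace_param:distance}--\ref{trace_param:trace} provide an $X\to X$ mapping property or $\mathcal{R}$-bound for this composition, so that step would require hypotheses the parameterization does not give you.

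The paper avoids all of this by a complex-analytic argument that never needs to make sense of $\mathcal{P}(y-x)$ acting on $X$-valued data. Writing $\mathcal{A} = \mathcal{P}(x)$, $\Delta\mathcal{A} = \mathcal{P}(\Delta x)$ and using that $O$ contains a ball $B(x,r)$, one shows via the resolvent identity and the von Neumann series that $z\mapsto R(w,\mathcal{A}+z\Delta\mathcal{A}) \in \mathcal{L}(F_2,F_1)$ is holomorphic, hence so is $z\mapsto u_{\mathcal{A}+z\Delta\mathcal{A}}(t)\in F_1$. The Taylor coefficients $a_k$ are then given by Cauchy's integral formula over $\abs{z}=r$, and on that circle $\mathcal{A}+z\Delta\mathcal{A} = \mathcal{P}(x+z\Delta x)$ with $x+z\Delta x\in O$, so the uniform maximal regularity estimate and the trace method of real interpolation bound $\norm{u_{\mathcal{P}(x+z\Delta x)}(t)}_{\Tr_p A}\lesssim\norm{f}_{L^p}$. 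This gives $\norm{a_k}_{\Tr_p A}\lesssim r^{-k}\norm{f}_{L^p}$, hence $G\colon O\to C([0,T];\Tr_p A)$, $x\mapsto u_{\mathcal{P}(x)}$, is holomorphic and bounded, and the Lipschitz bound with the factor $\norm{x-y}_E$ follows from the mean value inequality on the convex set $U$. The linear factor thus arises from Cauchy estimates on the derivative of a holomorphic map, not from isolating $\mathcal{P}(y-x)$ inside a Duhamel integral; that is the idea missing from your argument.
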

	\begin{proof}
		Fix some $x \in U$ and let $\mathcal{P}(x) = \mathcal{A}$. By definition one has $B(x,r) \subset O$ for some universal $r > 0$. Choose $\Delta x \in B(0,r)$ and let $\Delta \mathcal{A} = \mathcal{P}(\Delta x)$. Let $z \in \IC$ with $\abs{z} \le 1$. Observe that $x + z \Delta x \in B(x,r) \subset O$ and $\mathcal{A} + z \Delta \mathcal{A} = \mathcal{P}(x + z \Delta x)$. Recall that, by definition, for all $x \in O$ the spectrum of $\mathcal{P}(x)$ is contained in the sector $\{ z \in \IC \setminus \{ 0 \}: \abs{\arg z} \le \phi \}$. By the resolvent identity we have for $\abs{z} \le 1$ and $w$ outside this sector
			\begin{equation*}
				R(w, \mathcal{A} + z \Delta \mathcal{A}) = R(w, \mathcal{A}) + z R(w, \mathcal{A}) \Delta \mathcal{A} R(w, \mathcal{A} + z \Delta \mathcal{A}).
			\end{equation*}
		Hence, for $\abs{z}$ sufficiently small we have
			\begin{align*}
				R(w, \mathcal{A} + z \Delta \mathcal{A}) = (\Id - zR(w,\mathcal{A}) \Delta \mathcal{A})^{-1} R(w, \mathcal{A}).
			\end{align*}
		It follows from the von Neumann series representation that $z \mapsto R(w, \mathcal{A} + z\Delta \mathcal{A}) \in \mathcal{L}(F_2,F_1)$ is analytic. Hence, applying the elementary holomorphic functional calculus to the explicit formula for $u_{\mathcal{A} + z \Delta \mathcal{A}}$, we get for $f \in C_c^{\infty}((0,T);F_2)$
			\begin{align*}
				\MoveEqLeft u_{\mathcal{A} + z \Delta \mathcal{A}}(t) = \int_0^t e^{-(\mathcal{A} + z \Delta \mathcal{A})(t-s)} f(s) \d s \\
				& = \int_0^t \int_{\Gamma} e^{-(t-s) w} R(w, \mathcal{A} + z \Delta \mathcal{A}) f(s) \d w \d s. %
			\end{align*}
		It follows from this representation that $z \mapsto u_{\mathcal{A} + z\Delta \mathcal{A}} \in F_1$ is analytic. Hence, the coefficients $a_k$ of its series expansion are given by
			\begin{align*}
				a_k = \frac{1}{2\pi i} \int_{\abs{z} = r} u_{\mathcal{A} + z \Delta \mathcal{A}}(t) \frac{\d z}{z^{k+1}}
			\end{align*}
		for $r \in (0,1)$. We now estimate the coefficients in the stronger norm of $\Tr_p A$. The trace method for real interpolation~\cite[Section~1.2.2]{Lun95} and the uniform maximal regularity estimate give for $\mathcal{B} \in \mathcal{P}(O)$
		\begin{align*}
			\norm{u_{\mathcal{B}}(t)}_{\Tr_p A}^p & \lesssim \int_0^T \norm{u_{\mathcal{B}}(s)}_X^p + \norm{Bu_{\mathcal{B}}(s)}_X^p \d s \lesssim \int_0^T \norm{f(s)}_X^p \d s.
		\end{align*}
		As a consequence we have $\norm{a_k}_{\Tr_p A} \lesssim r^{-k} \norm{f}_{L^p([0,T];X)}$. It follows that the mapping $G\colon O \to C([0,T];\Tr_p A)$ given by $x \mapsto u_{\mathcal{P}(x)}$ is analytic and bounded on $U$~\cite[Proposition~3.7]{Din99}. Hence, $\norm{DG} \lesssim \norm{f}$ on $U$ and for all $x, y \in U$ one has by the convexity of $U$
		\begin{equation*}
			\begin{split}
				\MoveEqLeft \normalnorm{u_{\mathcal{P}(x)} - u_{\mathcal{P}(y)}}_{C([0,T];\Tr_p A)} = \biggnorm{\int_0^1 \frac{d}{dz} u_{\mathcal{P}(x + z(y-x))} \d z}_{C([0,T];\Tr_p A)} \\
				& \le \sup_{p \in U} \norm{DG(p)}_{\mathcal{L}(E,C([0,T];\Tr_p A))} \norm{x-y}_E \lesssim \norm{x-y}_E \norm{f}_{L^p([0,T];X)}. \qedhere
			\end{split}
		\end{equation*}
	\end{proof}
	
	\begin{proposition}\label{prop:linfity_initial}
		Let $(\mathcal{P}, F_1 , F_2, E, U, O)$ be an $L^p$-trace parameterization in a UMD space $X$. If $u_{\mathcal{A}}$ solves $\dot{u} + \mathcal{A}u = 0$ for $u(0) = u_0$, then uniformly in $T \in (0, \infty)$
		\begin{equation*}
			\normalnorm{u_{\mathcal{P}(x)} - u_{\mathcal{P}(y)}}_{C([0,T];\Tr_p A)} \lesssim \norm{x-y}_E \norm{u_0}_{\Tr_p A} \qquad \text{for all } x,y \in U.
		\end{equation*}
	\end{proposition}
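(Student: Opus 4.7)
I would follow essentially the same scheme as in Proposition~\ref{prop:linfty_inhomogenity}, replacing the variation of constants representation by the semigroup representation. The plan is to fix $x \in U$, pick a radius $r>0$ with $B(x,r) \subset O$ using assumption~\ref{trace_param:distance}, and write $\mathcal{A} = \mathcal{P}(x)$ and $\Delta \mathcal{A} = \mathcal{P}(\Delta x)$ for $\Delta x \in B(0,r)$. For $|z| \le 1$ we have $x + z \Delta x \in O$, so by the resolvent identity the map $z \mapsto R(w, \mathcal{A} + z \Delta \mathcal{A})$ is analytic with values in $\mathcal{L}(F_2, F_1)$ on $\{|z| \le 1\}$ for every $w$ outside the common sector, just as in the preceding proof.

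For $u_0 \in \Tr_p A$ the homogeneous solution can be represented via the holomorphic functional calculus as
\begin{equation*}
    u_{\mathcal{A} + z \Delta \mathcal{A}}(t) = \frac{1}{2\pi i}\int_{\Gamma} e^{-tw}\, R(w, \mathcal{A} + z \Delta \mathcal{A})\, u_0 \d w,
\end{equation*}
from which the map $z \mapsto u_{\mathcal{A} + z \Delta \mathcal{A}}$ is analytic into, say, $C([0,T];F_1)$. The key input that must now replace the uniform maximal regularity estimate used for the inhomogeneous problem is the trace embedding~\eqref{eq:trace_embedding} applied to $f = 0$, which gives uniformly in $\mathcal{B} = \mathcal{P}(y)$ with $y \in O$ and in $T$ the bound
\begin{equation*}
    \norm{u_{\mathcal{B}}}_{C([0,T]; \Tr_p A)} \lesssim \norm{u_0}_{\Tr_p A}.
\end{equation*}
This uses crucially property~\ref{trace_param:trace} of the parameterization, which makes $\Tr_p A$ independent of the element of $O$ chosen and the norms uniformly comparable.

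Given this uniform estimate, the Cauchy coefficients
$a_k = \tfrac{1}{2\pi i}\int_{|z|=r} u_{\mathcal{A}+z\Delta\mathcal{A}}(t)\, z^{-k-1} \d z$
obey $\norm{a_k}_{C([0,T];\Tr_p A)} \lesssim r^{-k}\norm{u_0}_{\Tr_p A}$. Exactly as before, \cite[Proposition~3.7]{Din99} then yields that the map $G\colon O \to C([0,T];\Tr_p A)$, $y \mapsto u_{\mathcal{P}(y)}$, is analytic and bounded on $U$ by $\norm{u_0}_{\Tr_p A}$. Hence $\norm{DG(p)}_{\mathcal{L}(E, C([0,T];\Tr_p A))} \lesssim \norm{u_0}_{\Tr_p A}$ uniformly for $p \in U$, and by convexity of $U$ we obtain for all $x,y \in U$
\begin{equation*}
    \normalnorm{u_{\mathcal{P}(x)} - u_{\mathcal{P}(y)}}_{C([0,T];\Tr_p A)} \le \sup_{p \in U}\norm{DG(p)}\,\norm{x-y}_E \lesssim \norm{x-y}_E \norm{u_0}_{\Tr_p A}.
\end{equation*}

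The main obstacle I anticipate is bookkeeping around the uniformity: one has to verify that the constants in~\eqref{eq:trace_embedding} depend only on the data of the parameterization (and not on $\mathcal{B}$ or $T$), which is guaranteed by Remark~\ref{rem:uniform_estimate} and property~\ref{trace_param:trace}, but also that the holomorphic contour $\Gamma$ and the resolvent bounds can be chosen uniformly on $O$; this is precisely what assumption~\ref{trace_param:r_boundedness} provides. Once these uniformities are secured, the argument is a mechanical transcription of the inhomogeneous case, with the semigroup formula in place of the convolution.
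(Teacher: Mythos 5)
Your proof is correct and follows the paper's strategy: establish analyticity of $z \mapsto u_{\mathcal{A}+z\Delta\mathcal{A}}$, obtain a uniform bound on $\normalnorm{u_{\mathcal{B}}}_{C([0,T];\Tr_p A)}$, and conclude by Cauchy estimates and convexity of $U$ exactly as in Proposition~\ref{prop:linfty_inhomogenity}. The only (minor) divergence is in how the uniform bound is produced: you invoke the trace embedding~\eqref{eq:trace_embedding} with $f=0$, which rests on the full uniform maximal regularity estimate of Remark~\ref{rem:uniform_estimate}, whereas the paper obtains the same bound more economically by real interpolation of the map $u_0 \mapsto [t \mapsto e^{-tB}u_0]$ between the endpoint estimates $D(B) \to L^{\infty}([0,T];D(B))$ and $X \to L^{\infty}([0,T];X)$, which only uses uniform boundedness of the semigroup on $D(B)$ and on $X$. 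Both routes give the needed uniform estimate $\normalnorm{u_{\mathcal{B}}}_{C([0,T];\Tr_p A)} \lesssim \norm{u_0}_{\Tr_p A}$, so the arguments are essentially equivalent; the paper's version is slightly more elementary since it bypasses maximal regularity for the homogeneous problem.
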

	\begin{proof}
		Similiarly as before, one obtains that the mapping $z \mapsto u_{\mathcal{A} + z \Delta \mathcal{A}}(t) = e^{-t(\mathcal{A} + z \Delta \mathcal{A})} u_0$ into $F_1$ is analytic. Hence, we again need a uniform estimate on $\norm{u_{\mathcal{B}}(t)}_{\Tr_p A}$ for our approach via complex analysis. We obtain this estimate by using real interpolation for the mapping $u_0 \mapsto [t \mapsto e^{-tB} u_0]$, which is uniformly bounded as mappings $D(B) \to L^{\infty}([0,T]; D(B))$ and $X \to L^{\infty}([0,T];X)$.
	\end{proof}

\section{A Priori Estimates for the Derivatives}

	In this subsection we provide the results for estimate~\eqref{eq:difference_estimate}.		
	
	\begin{proposition}\label{prop:l2}
		Let $(\mathcal{P}, F_1 , F_2, E, U, O)$ be an $L^p$-trace parameterization in a UMD-space $X$. If $u_{\mathcal{A}}$ solves $\dot{u} + \mathcal{A}u = f$ for $u(0) = 0$, then uniformly in $T \in (0,\infty]$
			\begin{equation*}
				\normalnorm{\dot{u}_{\mathcal{P}(x)} - \dot{u}_{\mathcal{P}(y)}}_{L^p([0,T);X)} \lesssim \norm{x-y}_E (\norm{f}_{L^p([0,T);X)} + \norm{u_0}_{\Tr_p A}) \quad \text{for all } x,y \in U.
			\end{equation*}
	\end{proposition}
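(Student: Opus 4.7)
The plan is to follow the complex-analytic strategy of Propositions~\ref{prop:linfty_inhomogenity} and~\ref{prop:linfity_initial}, but with the target space strengthened from $C([0,T];\Tr_p A)$ to $L^p([0,T);X)$ and treating the inhomogeneity and the initial value simultaneously (so I read the hypothesis as $u(0) = u_0 \in \Tr_p A$, matching the right-hand side and the application in~\eqref{eq:a_priori_derivative}). Fix $x \in U$, put $\mathcal{A} \coloneqq \mathcal{P}(x)$, and choose $r > 0$ with $B(x,r) \subset O$. For $\Delta x \in B(0,r)$ and $z$ in the closed unit disc of $\IC$ set $\mathcal{A}(z) \coloneqq \mathcal{P}(x + z \Delta x) = \mathcal{A} + z \mathcal{P}(\Delta x)$. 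The resolvent identity, together with the uniform resolvent bounds of property~\ref{trace_param:r_boundedness} in Definition~\ref{def:parameterized_kato}, shows that $z \mapsto R(w, \mathcal{A}(z))$ is analytic in $\mathcal{L}(F_2, F_1)$, locally uniformly in $w$ outside the fixed sector. Inserting this into the Dunford integral representation of $e^{-t \mathcal{A}(z)}$ and of $\mathcal{A}(z) e^{-t \mathcal{A}(z)}$ yields that $z \mapsto \dot{u}_{\mathcal{A}(z)}$ is analytic from the unit disc into $L^p([0,T);X)$.

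The quantitative input is the uniform maximal regularity estimate of Remark~\ref{rem:uniform_estimate}: for every $y \in O$ one has
\[
    \normalnorm{\dot{u}_{\mathcal{P}(y)}}_{L^p([0,T);X)} \le C \bigl( \norm{f}_{L^p([0,T);X)} + \norm{u_0}_{\Tr_p \mathcal{P}(y)_{|X}} \bigr)
\]
with $C$ independent of $y \in O$ and of $T$. Property~\ref{trace_param:trace} then upgrades this to a bound by $\norm{f}_{L^p} + \norm{u_0}_{\Tr_p A}$. Hence the analytic family $z \mapsto \dot{u}_{\mathcal{A}(z)}$ is uniformly bounded in $L^p([0,T);X)$ on the closed unit disc, and Cauchy's formula provides Taylor coefficients $a_k$ with
\[
    \norm{a_k}_{L^p([0,T);X)} \lesssim r^{-k} \bigl( \norm{f}_{L^p([0,T);X)} + \norm{u_0}_{\Tr_p A} \bigr).
\]

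Exactly as in the proof of Proposition~\ref{prop:linfty_inhomogenity}, invoking \cite[Proposition~3.7]{Din99} shows that the map $G\colon O \to L^p([0,T);X)$, $x \mapsto \dot{u}_{\mathcal{P}(x)}$, is analytic with $\norm{DG} \lesssim \norm{f}_{L^p} + \norm{u_0}_{\Tr_p A}$ on the closed, convex set $U$. Integrating $DG$ along the straight segment from $x$ to $y$, which lies in $U$ by convexity, yields the claimed estimate.

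The main obstacle compared with the preceding two propositions is that analyticity must be established directly in the $L^p$-maximal regularity norm, rather than in a weaker ambient norm that is subsequently upgraded by real interpolation. In particular one has to control the additional factor of $w$ produced by $\mathcal{A}(z) e^{-t \mathcal{A}(z)}$ in the Dunford integral; this is precisely where the $\mathcal{R}$-boundedness in~\ref{trace_param:r_boundedness}, together with the UMD assumption on $X$, enters to produce a uniform $L^p$-bound. Once this analyticity in $L^p([0,T);X)$ is secured, the remainder of the argument is a verbatim adaptation of the proofs of Propositions~\ref{prop:linfty_inhomogenity} and~\ref{prop:linfity_initial}.
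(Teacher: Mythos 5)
Your proposal is correct and matches the paper's intended argument: the paper's proof of Proposition~\ref{prop:l2} is simply ``Again, this follows from analyticity and the uniform maximal regularity estimate discussed in Remark~\ref{rem:uniform_estimate},'' and you have correctly unpacked this by applying the complex-analytic template of Propositions~\ref{prop:linfty_inhomogenity} and~\ref{prop:linfity_initial} with target space $L^p([0,T);X)$ and the uniform maximal regularity bound as the quantitative input. Your reading of $u(0)=0$ as $u(0)=u_0 \in \Tr_p A$ is also the right call, since the stated estimate and its use in~\eqref{eq:a_priori_derivative} both require a nonzero initial value.
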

	\begin{proof}
		Again, this follows from analyticity and the uniform maximal regularity estimate discussed in Remark~\ref{rem:uniform_estimate}.
	\end{proof}
	
\section{Remarks \& Complements}

	We finish with some remarks on the used methods and obtained results. 
	
	\subsection{The \texorpdfstring{$L^p$}{p}-range for maximal regularity}
	
	Most known $L^p$-maximal regularity results are insensitive to the exponent $p$, i.e.\ they hold for one $p \in (1, \infty)$ if and only if they hold for all $p \in (1, \infty)$. In the autonomous case this follows from the theory of singular integrals and is a central fact, whereas in the non-autonomous case one often makes use of Acquistapace--Terreni type results as in~\cite{HaaOuh15} or~\cite{AreMon14}.
	
	In contrast, our results on maximal regularity do not extend to all $p \in (1, \infty)$ even if one deals with symmetric forms. Let us give an easy example based on one dimensional differential operators.
	
	\begin{example}\label{ex:robin}
		For $\beta \in \IC$ consider the forms $a_{\beta}\colon H^1([0,1]) \times H^1([0,1]) \to \IC$ with
    	\begin{equation*}
    		a_{\beta}(u,v) = \int_0^1 u \overline{v} + \int_{0}^1 u' \overline{v'} + \beta u(0) \overline{v(0)}.
    	\end{equation*}
    	The operator $A_{\beta}$ associated to $a_{\beta}$ is $A_{\beta}u = u - u''$ with explicitly known domain $D(A_{\beta}) = \{ u \in H^2([0,1]): u'(0) = \beta u(0) \text{ and } u'(1) = 0 \}$. By~\cite[p.~321, Theorem]{Tri78} we have for $p > 4$
    	\begin{equation*}
    		\begin{split}
    			\Tr_p A_{\beta} & = (L^2([0,1]),D(A))_{1-1/p,p} \\
    			& = \{ u \in B^{2(1-1/p)}_{2p}([0,1]): u'(0) = \beta u(0) \text{ and } u'(1) = 0 \},
    		\end{split} 
    	\end{equation*}
    	whereas $\Tr_p A_{\beta} = B_{2p}^{2(1-1/p)}([0,1])$ for $p < 4$. Hence, for $p > 4$ there is a smooth function $x\colon [0,1] \to \IR$ with $x \in \Tr_p A_0$, but $x \not\in \Tr_p A_1$. Choose $w \in \MaxReg_p^{A_0}([0,1])$ with $w(1) = x$. 
    	Now, let $a(t,u,v) = a_0(u,v) \mathds{1}_{[0,1)} + a_1(u,v) \mathds{1}_{[1,2)}$ and $f = (\dot{w} + A_0 w) \mathds{1}_{[0,1]}$. Then $a$ fails to have maximal $L^p$-regularity: if this would be the case and $u$ is the solution of~\eqref{eq:nacp} for $u(0) = w(0)$, then $u(\cdot - 1)$ solves $\dot{v} + A_1 v = 0$ with $v(0) = w(1) = x$. This implies $x \in \Tr_p A_1$, which is not the case. However, for $p < 4$ the mapping $\IC \to \mathcal{L}(V,V')$ induced by $\beta \mapsto a_{\beta}$ gives rise to an $L^p$-trace parameterization and one can apply~Theorem~\ref{thm:existence} and Theorem~\ref{thm:uniqueness} to obtain maximal $L^p$-regularity. 
	\end{example}
	
	Note that the above argument is generic. It works as long as $\Tr_p A_1 \not\subseteq \Tr_p A_0$. In particular, one sees that the exponent $p = 2$ obtained in Corollary~\ref{cor:elliptic_operator} is optimal for general elliptic operators.
	
	\begin{example}
		Let $a \in L^{\infty}(\IR)$ be \emph{real} with $a \ge \epsilon$ almost everywhere. We consider the elliptic operator induced by the form $a\colon H^1(\IR) \times H^1(\IR) \to \IC$ with
		\begin{equation*}
			a(u,v) = \int_{\IR} a u' \overline{v'} + \int_{\IR} u \overline{v}.
		\end{equation*}
		Since $a$ is symmetric, we clearly have $\Tr_2 A = D(A^{1/2}) = H^{1}(\IR)$. Further, $D(A) = \{ u \in H^1(\IR): au' \in H^{1}(\IR) \}$. It follows from the reiteration theorem for real interpolation that $\Tr_p A = \{ u \in H^{1}(\IR): au' \in B^{1-2/p}_{2p}(\IR) \}$ for $p > 2$. Note that if $a$ is smooth, then $\Tr_p A = \{ u \in H^1(\IR) = B^{1}_{22}(\IR): u' \in B^{1-2/p}_{2p}(\IR) \} = B^{2-2/p}_{2p}(\IR)$. Now, choose an elliptic parameter $a$ with $a \not\in B_{2p}^{1-2/p}([-1,1])$ for all $p > 2$. Then for $u \in C_c^{\infty}(\IR)$ with $u'(x) \in [1,2]$ for all $x \in [-2,2]$ one has $u \not\in \Tr_p(A)$ for $p > 2$, whereas $u$ is clearly in the domain of the second derivative. From the generic argument detailed in Example~\ref{ex:robin} one obtains a non-autonomous form of elliptic operators that fails maximal $L^p$-regularity for all $p > 2$.
	\end{example}
	
	\subsection{Symmetric forms without parameterization}
	
	It is natural to ask whether every family of forms satisfying the Kato square root property uniformly can be parameterized by an $L^2$-trace parameterization. The next example shows that this can be troublesome.
	
	\begin{example}
		In~\cite[Section~4]{McI90} McIntosh constructs symmetric forms $a_t\colon V \times V \to \IC$ for $t \in (-1,1)$ for which $t \mapsto A_t^{1/2} \in \mathcal{L}(V,H)$ is not real-analytic. Now, assume that the forms $a_t$ for $t \in (-1,1)$ are parameterized by an $L^2$-trace parameterization $(\mathcal{P}, F_1 , F_2, E, U, O)$ such that $A_t = \mathcal{P}(x+ty)_{|H}$ for some $x,y \in U$. It then follows along the lines of the previous sections that the map $O \to \mathcal{L}(V,H)$ given by $x \mapsto \mathcal{P}(x)_{|H}^{1/2}$ is analytic. A fortiori, $t \mapsto \mathcal{P}(x+ty)_{|H}^{1/2} = A_t^{1/2}$ is real-analytic contradicting our choice. More generally, no real-analytic selection $\gamma\colon (-1,1) \to U$ of $a_t$ can exist by the same argument. 
	\end{example}
						
	\emergencystretch=0.75em
	\printbibliography

\end{document}